\newtheorem{theorem}{Theorem}[section]
\newtheorem{lemma}[theorem]{Lemma}
\newtheorem{proposition}[theorem]{Proposition}
\newtheorem{corollary}[theorem]{Corollary}
\theoremstyle{definition}
\theoremstyle{remark}
\numberwithin{equation}{section}
\newcommand{\real}{{\mathbb R}}
\newcommand{\A}{{\mathcal A}}
\newcommand{\B}{{\mathcal B}}
\newcommand{\E}{{\mathcal E}}
\newcommand{\F}{{\mathcal F}}
\newcommand{\G}{{\mathcal G}}
\newcommand{\M}{{\mathcal M}}
\newcommand{\N}{{\mathcal N}}
\newcommand{\8}{\infty}
\newcommand{\D}{{\mathcal D}}
\newcommand{\e}{\varepsilon}
\renewcommand{\t}{\tau}
\newcommand{\be}{\begin{eqnarray*}}
\newcommand{\ee}{\end{eqnarray*}}
\newcommand{\beq}{\begin{equation}}
\newcommand{\eeq}{\end{equation}}
\newcommand{\beqn}{\begin{equation*}}
\newcommand{\eeqn}{\end{equation*}}
\newcommand{\bsp}{\begin{split}}
\newcommand{\esp}{\end{split}}
\begin{document}

\setcounter{page}{1}

\title[Complex interpolation  noncommutative Hardy spaces]{ Complex interpolation  of noncommutative Hardy spaces associated semifinite von Neumann algebra  }

\author[Turdebek N. Bekjan]{Turdebek N. Bekjan }

\address{College of Mathematics and Systems Science, Xinjiang
University, Urumqi 830046, China.}
\email{\textcolor[rgb]{0.00,0.00,0.84}{bekjant@yahoo.com}}

\author{Kordan N. Ospanov}

\address{Faculty of Mechanics and Mathematics, L.N. Gumilyov Eurasian National University,  Astana 010008, Kazakhstan.}
\email{\textcolor[rgb]{0.00,0.00,0.84}{ospanov$_-$kn@enu.kz}}


\subjclass[2010]{Primary 46L52; Secondary 47L05.}

\keywords{subdiagonal
algebra, complex interpolation, noncommutative Hardy space, semi-finite von
Neumann algebra.}

\begin{abstract}
We  proved a complex interpolation theorem of noncommutative Hardy spaces  associated with semi-finite von Neumann algebras and extend
the Riesz type factorization to the semi-finite case.

\end{abstract}
\maketitle


\section{Introduction}

This paper deals with the complex interpolation and  Riesz type factorization of  noncommutative Hardy spaces associated with semifinite von Neumann algebras.
Let $\M$ be a semifinite (respectively, finite) von Neumann algebra equipped with  a normal faithful semifinite (respectively, finite)
trace $\tau$. Given $0<p\le\8$ we denote by $L^p(\M)$ the usual noncommutative
$L^p$-space associated with $(\M,\tau)$. Recall that $L^\8(\M)=\M$,
equipped with the operator norm.  The norm of $L^p(\M)$ will be
denoted by $\|\cdot\|_p$.
In this paper, $[K]_{p}$ denotes the closed linear
span of  $K$ in $L_{p}(\mathcal{M})$ (relative to the w*-topology for $p =\infty$) and $J(K)$ is the family of the adjoints of the elements of $ K$.

Let $\D$ be a von Neumann
 subalgebra of $\M$ such that the restriction of $\tau$ to $\D$ is still semifinite. Let $\E$ be the (unique) normal positive faithful
 conditional expectation of $\M$ with respect to $\D$ such that $\tau\circ\E=\tau$.  A {\it subdiagonal algebra} of $\M$ with respect to $\E$ (or $\D$) is a
w*-closed subalgebra $\A$ of $\M$ satisfying the following
conditions
 \begin{enumerate}[\rm (i)]

\item $\mathcal{A}+ J(\mathcal{A})$ is w*-dense in  $\mathcal{M}$,

\item $\mathcal{E}(xy)=\mathcal{E}(x)\mathcal{E}(y),\; \forall\;x,y\in
\mathcal{A},$

\item $\mathcal{A}\cap J(\mathcal{A})=\mathcal{D}.$
\end{enumerate}
The algebra $\D$ is called the
{\it diagonal} of $\A$.

 It is proved by Ji \cite{J} (respectively, Exel \cite{E}) that a  subdiagonal algebra $\A$ of a semifinite (respectively, finite) von Neumann algebra $\M$  is automatically  maximal, i.e.,  $\mathcal{A}$ is not properly contained
in any other subalgebra of $\mathcal{M}$ which is a subdiagonal algebra with respect to
$\mathcal{E}$. This maximality yields the
following useful characterization of $\A$:
 \beq\label{maxi-critere}
 \A=\{x\in\M\;:\; \t(xa)=0,\; \forall\; a\in\A_0\},
 \eeq
where $\A_0=\A\cap\ker\E$ (see \cite{A}).

 For $p<\8$ we define $H^p(\A)$ to be the
closure of $\A\cap L^{p}(\M)$ in $L^p(\M)$, and for $p=\8$ we simply set
$H^\8(\A)=\A$ for convenience. These are the so-called Hardy
spaces associated with $\A$. They are noncommutative extensions of
the classical Hardy spaces on the torus $\mathbb T$. For the finite case,  most results on
the classical Hardy spaces on the torus have been established in
this noncommutative setting. We refer to \cite{A,BL1,BL2,BL3,BL4,MW,PX}. Here we mention only the interpolation
theorem and the Riesz
factorization theorem directly related with the objective of this paper.

In \cite{P}, Pisier gave a new proof of the interpolation theorem
of Peter Jones (see \cite{Jo} or \cite{BS}, p.414). He obtained the complex case of Peter Jones' theorem as a
consequence of the real case. The Pisier's method does  extend to the noncommutative case and the case of Banach space valued $H^p$-spaces  (see  $\S2$ in \cite{P}). Pisier and Xu \cite{PX} obtained the following
noncommutative version of Peter  Jones' theorem.
\begin{theorem} \label{interpolation-finite-hp}{\rm(Pisier/Xu)} Let $\M$ be a  finite von Neumann algebra, and let $\A$ be a  subdiagonal algebra of $\M$. If  $0<p_0,\ p_1\le\8$, $0<\theta<1$ and $\frac{1}{p}=\frac{1-\theta}{p_0}+\frac{\theta}{p_1}$, then
$$
(H^{p_0}(\A), H^{p_1}(\A))_\theta=H^{p}(\A)
$$
and
$$
\|x\|_p\le \|x\|_{(H^{p_0}(\A), H^{p_1}(\A))_\theta}\le C\|x\|_p,\quad \forall x\in H^{p}(\A),
$$
where $C$  depends only on $p_0, p_1$ and $\theta$.
\end{theorem}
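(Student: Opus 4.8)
The plan is to establish the two continuous inclusions separately, using as external inputs the isometric complex interpolation of the noncommutative $L^p$-spaces, $(L^{p_0}(\M),L^{p_1}(\M))_\theta=L^p(\M)$, together with an inner--outer type factorization in $H^p(\A)$. I would first carry out the argument in the Banach range $1\le p_0,p_1\le\8$, where the trace pairing and the complex method behave classically, and only afterwards reduce the quasi-Banach range (where some exponent drops below $1$) to it.

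For the inclusion $(H^{p_0}(\A),H^{p_1}(\A))_\theta\subseteq H^p(\A)$ with $\|x\|_p\le\|x\|_{(H^{p_0}(\A),H^{p_1}(\A))_\theta}$, I would note that each $H^{p_i}(\A)$ embeds contractively into $L^{p_i}(\M)$, so by exactness of the complex functor the induced map $(H^{p_0}(\A),H^{p_1}(\A))_\theta\to(L^{p_0}(\M),L^{p_1}(\M))_\theta=L^p(\M)$ is a contraction; this already gives the norm estimate. It then remains to see that the image lies in $H^p(\A)$, for which I would use the annihilator description furnished by the maximality relation \eqref{maxi-critere}, namely $H^p(\A)=\{x\in L^p(\M):\t(xa)=0,\ \forall\,a\in\A_0\}$. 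Indeed, given $x$ in the interpolation space there is a bounded analytic $F$ on the strip with $F(\theta)=x$ and edge values in the respective $H^{p_i}(\A)$; for each $a\in\A_0$ the scalar function $z\mapsto\t(F(z)a)$ is bounded and analytic and vanishes on both edges, hence vanishes at $\theta$, so $x$ satisfies the annihilator condition and belongs to $H^p(\A)$.

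The substance is the reverse inclusion $H^p(\A)\subseteq(H^{p_0}(\A),H^{p_1}(\A))_\theta$ with $\|x\|_{(H^{p_0}(\A),H^{p_1}(\A))_\theta}\le C\|x\|_p$, which requires exhibiting, for each $x\in H^p(\A)$, an admissible analytic $F$ with $F(\theta)=x$ and controlled edge norms. The device is the noncommutative outer function: factor $x=u\,h$ with $u$ a partial isometry and $h$ outer satisfying $|h|=|x|$, set $a(z)=p\bigl(\tfrac{1-z}{p_0}+\tfrac{z}{p_1}\bigr)$, so that $a(\theta)=1$ while $\Re a=p/p_0$ on $\Re z=0$ and $\Re a=p/p_1$ on $\Re z=1$, and construct an analytic family $z\mapsto g(z)$ of outer elements with $g(\theta)=h$ and boundary moduli $|g(iy)|=|x|^{p/p_0}$, $|g(1+iy)|=|x|^{p/p_1}$. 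Putting $F(z)=u\,g(z)$ gives $F(\theta)=x$, and on the edges $\|F(iy)\|_{p_0}=\bigl\|\,|x|^{p/p_0}\bigr\|_{p_0}=\|x\|_p^{p/p_0}$ and likewise $\|F(1+iy)\|_{p_1}=\|x\|_p^{p/p_1}$, which yields the desired bound after homogenization.

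The main obstacle is exactly the construction and analyticity of the outer family $g(z)$. In the commutative case one simply exponentiates the analytic completion of $a(z)\log|x|$, but here $|x|$ need not commute with the remaining factors, so $|x|^{a(z)}$ is not available by functional calculus, and one must produce genuinely operator-valued outer functions of the strip variable that are analytic in $z$, have the prescribed boundary moduli, and interpolate to $h$ at $z=\theta$. I expect this to rest on the Szegő/Jensen ($\t\circ\log$, Fuglede--Kadison determinant) machinery for subdiagonal algebras and on the existence of outer operators with prescribed determinant, with the delicate points being uniform-in-$y$ control and boundary continuity of $z\mapsto g(z)$ in the $L^{p(z)}$-norms. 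Finally, for the case where an endpoint is below $1$ I would avoid duality by first invoking the Riesz-type factorization $H^p(\A)=H^r(\A)\cdot H^s(\A)$ into higher-exponent factors, applying the Banach-range argument to each factor, and recombining, in the spirit of Pisier's deduction of the complex case from the real one.
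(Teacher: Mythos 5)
You have identified the right crux, but your resolution of it fails: the analytic family of \emph{outer} elements $g(z)$ with $g(\theta)=h$ and boundary moduli exactly $|g(iy)|=|x|^{p/p_0}$, $|g(1+iy)|=|x|^{p/p_1}$ does not exist in general, already in the scalar case $\M=L^\infty(\T)$, $\A=H^\infty(\T)$. An outer function is determined by its modulus up to a unimodular constant (up to a unitary of $\D$ in the subdiagonal setting), so your boundary conditions force $g(iy)=\lambda(y)\,g(i0)$ with $|\lambda(y)|=1$ along each edge; analyticity of $z\mapsto g(z)$ then rigidifies the family and forces $|x|^{p/p_0}$ and $|x|^{p/p_1}$ to be proportional, which is false for any nonconstant $|x|$. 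Concretely, the natural candidate $g(z)=\exp\bigl(a(z)(\log|x|+i\,\widetilde{\log|x|})\bigr)$ has boundary modulus $|x|^{p/p_0}\exp\bigl(-\Im a(iy)\,\widetilde{\log|x|}\bigr)$, and the conjugate-function term destroys any uniform-in-$y$ control of the edge norms; no amount of Szeg\H{o}/Fuglede--Kadison machinery removes it, and noncommutativity only adds to the problem (as you note, $|x|^{a(z)}$ is not even multiplicatively compatible with the unitary part). This is not a repairable technicality: the endpoint couple $(H^1,H^\infty)$, which is included in the statement, is exactly Peter Jones' theorem \cite{Jo}, proved by hard $\overline{\partial}$-estimates precisely because this soft outer-family argument breaks down. (For $1<p_0,p_1<\infty$ you could instead salvage your plan by the retract argument using the $L^p$-boundedness of the Riesz projection, but that too fails at the endpoints.)

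The paper's appendix avoids constructing an analytic function for an individual $x$ altogether and follows Pisier's reduction \cite{P}: embed $L^p(\M)$ isometrically into the weak space $L^{p,\8}(\N)$ of a diagonal amplification via $K_p(x)=\mathrm{diag}(j^{-1/p}x)_{j\ge1}$ (identity (A.2)); apply Stein's interpolation theorem to the analytic family $K_z$ acting between the quotient couples $\bigl(L^1(\M)/J(H^1_0(\A)),\,\M/J(\A_0)\bigr)$ and $\bigl(X/X_0,\,\N/J(\B_0)\bigr)$ (Lemma A.2); identify the target couple's interpolation space with $H^{p,\8}(\B)$ through a real-interpolation/$K$-closedness lemma (Lemma A.1, resting on Kisliakov's theorem \cite{K} and Janson's interpolation of quotient couples \cite{Ja}); and then return to $H^p(\A)\subset(H^1(\A),\A)_\theta$ by duality between Calder\'on's first and second complex methods, using Bergh's equality \cite{B}, finishing with reiteration for general $1\le p_0,p_1\le\8$. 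Two of your peripheral steps do agree with the paper: the forward inclusion (your annihilator argument is fine for $p\ge1$; the paper instead invokes the Saito-type identity $H^{p_0}(\A)\cap L^p(\M)=H^p(\A)$, Proposition 3.3 in \cite{BX}, which also covers $p<1$), and the reduction of the quasi-Banach range to the Banach range via the Riesz factorization $x=x_1x_2\cdots x_n$ with $x_j\in H^{np}(\A)$ and multiplication of the corresponding analytic functions, which is exactly the paper's final step. But the core Banach-range inclusion $H^p(\A)\subseteq(H^{p_0}(\A),H^{p_1}(\A))_\theta$, as you propose to prove it, cannot be completed.
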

The first-named author \cite{B1}, using Pisier's method, proved that the real case of Peter Jones' theorem   for  noncommutative Hardy spaces associated with semifinite von Neumann algebras holds (also see \cite{STZ}). The main purpose of the present paper is to prove the complex case of Peter Jones' theorem for noncommutative Hardy spaces  associated with semifinite von Neumann algebras, i.e. to extend the above Pisier/Xu's theorem to the semifinite case.

The Riesz
factorization  asserts that $H^p(\A)=H^q(\A)\cdot H^r(\A)$
for any $0< p, q,r\le\8$ such that $1/p=1/q+1/r$. More
precisely, given $x\in H^p(\A)$ and $\e>0$ there exist $y\in
H^q(\A)$ and $z\in H^r(\A)$ such that
 $$x=yz\quad\mbox{and}\quad \|y\|_q\,\|z\|_r\le\|x\|_p +\e.$$
This result is proved in \cite{S} for $p=q=2$, in
\cite{MW} for $r=1$, and independently in \cite{L}
and in \cite{PX} for $1\le p, q,r\le\8$, and in \cite{BX} for the general case as above. We prove a similar result in the semifinite case.

The organization of the paper is as follows. In Section 2, we
prove the complex interpolation theorem. Section 3 is devoted to the Riesz type factorization theorem. In Section 4, we give some applications of the complex interpolation theorem.

We will keep all previous notations throughout the paper. Unless
explicitly stated otherwise,  $\M$ always denotes a semifinite von Neumann algebra equipped with a
semifinite  faithful trace $\tau$, and $\A$   a subdiagonal algebra
of  $\M$.

\section{Complex interpolation}

Let $S$ (respectively, $\overline{S}$ ) denote the open strip $\{z:\;0<\mathrm{Re}z<1\}$ (respectively, the closed strip $\{z:\;0\le \mathrm{Re}z\le1\}$ ) in the complex plane $\mathbb{C}$.
Let $A(S)$ be the space of complex valued functions, analytic in $S$ and continuous and bounded
in  $\overline{S}$.  Let $(X_0, X_1)$ be a compatible couple of complex quasi-Banach spaces.  Let us denote by $\F_0(X_0,
X_1)$ the family of functions of the form $f(z)=\sum_{k=1}^n f_k(z)x_k$ with $f_k$ in $A(S)$ and $x_k$ in $X_0 \cap X_1$.
We equip $\F_0(X_0, X_1)$ with the norm:
 $$
 \big\|f\big\|_{\F_0(X_0, X_1)}=\max\big\{
 \sup_{t\in\real}\big\|f(it)\big\|_{X_0}\,,\;
 \sup_{t\in\real}\big\|f(1+it)\big\|_{X_1}\big\}.
 $$
Then  $\F_0(X_0, X_1)$ becomes
a quasi-Banach  space. Let $0<\theta<1$. The complex interpolation norm on
$X_0\cap X_1$ is defined for $\theta$ by
 $$\|x\|_{(X_0,\; X_1)_\theta}=\inf\big\{\big\|f\big\|_{\F_0(X_0, X_1)}\;:\; f(\theta)=x,\;
 f\in\F_0(X_0, X_1)\big\}.$$
We denote by $(X_0,\; X_1)_\theta$ the completion of $(X_0\cap X_1,\;\|\cdot\|_{(X_0,\; X_1)_\theta})$.

It is well-known that if $(X_0, X_1)$ is a compatible couple of complex Banach spaces, then the definition of the interpolation
spaces here coincides with that of \cite{C}.

Let $e$ be  a projection  in $\mathcal{D}$. We set
\be
\mathcal{M}_{e}=e\mathcal{M}e,\quad \mathcal{A}_{e}=e\mathcal{A}e,\quad
\mathcal{D}_{e}=e\mathcal{D}e,
\ee
and let $\mathcal{E}_{e}$ be the restriction of $\E$  to $\mathcal{M}_{e}$.
Using Lemma 3.1 in \cite{B1} we obtain that
  $\mathcal{A}_{e}$ is a subdiagonal algebra of
 $\mathcal{M}_{e}$ with respect to $\mathcal{E}_{e}$ (or $\mathcal{D}_{e}$).

Since  $\mathcal{D}$ is semifinite, we can choose an increasing family of
$\{e_{i}\}_{i\in I}$ of $\tau$-finite ($\tau(e_{i})<\8$) projections in $\mathcal{D}$ such that
$e_{i}\rightarrow1$  strongly, where 1 is identity of $\mathcal{M}$ (see Theorem 2.5.6 in \cite{Sa}). Throughout,  $\{e_{i}\}_{i\in I}$ will be used to indicate this net.

\begin{lemma}\label{lem:saito-type} Let $0<p_0\le p\le p_1\le\8$. Then
\beq\label{eq:saito-type}
H^p(\A)=\left(H^{p_0}(\A)+H^{p_1}(\A)\right)\cap L^p(\M).
\eeq
\end{lemma}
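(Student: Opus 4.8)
The plan is to prove the two inclusions of \eqref{eq:saito-type} separately, in each case compressing to the finite von Neumann algebras $\M_{e_i}=e_i\M e_i$, where the statement is the classical Saito identity and where, the restricted trace being finite, the Hardy spaces are nested, $H^{p_1}(\A_{e_i})\subseteq H^{p}(\A_{e_i})\subseteq H^{p_0}(\A_{e_i})$, so that there the sum on the right of \eqref{eq:saito-type} collapses to $H^{p_0}(\A_{e_i})$ and the identity reads $H^{p}(\A_{e_i})=H^{p_0}(\A_{e_i})\cap L^p(\M_{e_i})$. Two facts would be used throughout: first, compression $w\mapsto e_iwe_i$ is a contraction of $L^q(\M)$ into $L^q(\M_{e_i})$ carrying $\A\cap L^q(\M)$ into $\A_{e_i}\cap L^q(\M_{e_i})$, because $e_i\in\D\subseteq\A$, and hence $H^q(\A)$ into $H^q(\A_{e_i})$; second, $e_ixe_i\to x$ in $L^q(\M)$ for every $x\in L^q(\M)$ with $q<\8$, since $e_i\uparrow 1$ strongly.

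For the inclusion $\supseteq$ I would take $x\in(H^{p_0}(\A)+H^{p_1}(\A))\cap L^p(\M)$ and write $x=y+z$ with $y\in H^{p_0}(\A)$ and $z\in H^{p_1}(\A)$. Compressing gives $e_ixe_i=e_iye_i+e_ize_i$ with $e_iye_i\in H^{p_0}(\A_{e_i})$ and $e_ize_i\in H^{p_1}(\A_{e_i})\subseteq H^{p_0}(\A_{e_i})$, so $e_ixe_i\in H^{p_0}(\A_{e_i})\cap L^p(\M_{e_i})=H^{p}(\A_{e_i})$ by the finite-algebra identity. Since $H^{p}(\A_{e_i})\subseteq H^{p}(\A)$ and $e_ixe_i\to x$ in $L^p(\M)$, the $L^p$-closedness of $H^{p}(\A)$ yields $x\in H^{p}(\A)$; the degenerate case $p=\8$ forces $p_1=\8$ and reduces to the known identity $H^{p_0}(\A)\cap\M=\A$.

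For the reverse inclusion $\subseteq$ I would compress again: for $x\in H^{p}(\A)$ the finite identity gives $e_ixe_i\in H^{p}(\A_{e_i})\subseteq H^{p_0}(\A_{e_i})\cap H^{p_1}(\A_{e_i})$, so every $e_ixe_i$ lies in $H^{p_0}(\A)\cap H^{p_1}(\A)$ and converges to $x$ in $L^p(\M)$. The difficulty is that membership in the \emph{honest} sum $H^{p_0}(\A)+H^{p_1}(\A)$ cannot be read off from this by a soft limiting argument: the sum norm demands two \emph{separately analytic} summands, the naive spectral splitting $x=u\,|x|\,\chi_{(1,\8)}(|x|)+u\,|x|\,\chi_{[0,1]}(|x|)$ (with $x=u|x|$ polar) destroys analyticity, and there is no bounded analytic projection on $L^{p_0}(\M)$ or $L^{p_1}(\M)$ once these exponents leave the range $(1,\8)$.

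I expect this reverse inclusion to be the main obstacle. The route I would follow is to telescope $x=\sum_i\bigl(e_ixe_i-e_{i-1}xe_{i-1}\bigr)$ along the net and to route each increment, which lies in $H^{p_0}(\A)\cap H^{p_1}(\A)$, into an $H^{p_0}$-part or an $H^{p_1}$-part according to the spectral size of $x$, exactly as in the $K$-functional decomposition used for the real interpolation theorem in \cite{B1}; the crux is then to show that the two routed series converge in $L^{p_0}(\M)$ and $L^{p_1}(\M)$ respectively. The obstruction is quantitative: the trivial bound $\|w\|_{H^{p_0}(\A)+H^{p_1}(\A)}\le\|w\|_{p_0}$ available on each block is only $\le\tau(e_i)^{1/p_0-1/p}\|w\|_p$ after normalizing the finite trace on $\M_{e_i}$, and this Hölder constant blows up as $e_i\uparrow1$, so the finite-case decomposition is not uniform and must be replaced by a genuine analytic estimate valid in the semifinite algebra. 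For $1<p_0\le p_1<\8$ the bounded analytic (Riesz) projection furnishes the decomposition directly and the inclusion is easy, so the whole difficulty is concentrated at the endpoints and in the quasi-Banach range $0<p<1$, where the routing-and-convergence argument above is unavoidable.
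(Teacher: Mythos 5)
Your treatment of the inclusion $\supseteq$ is essentially the paper's own proof: compress by the net $\{e_i\}$, use $e_iH^q(\A)e_i=H^q(\A_{e_i})$ (Lemma 3.1 in \cite{B1}), collapse the sum inside the finite algebra $\M_{e_i}$ (where the Hardy spaces are nested), invoke the finite-case Saito-type identity $H^{p_0}(\A_{e_i})\cap L^p(\M_{e_i})=H^p(\A_{e_i})$ (Proposition 3.3 in \cite{BX}), and conclude via $\lim_i\|x-e_ixe_i\|_p=0$ and the $L^p$-closedness of $H^p(\A)$. Your explicit reduction of the degenerate case $p=\8$ to $H^{p_0}(\A)\cap\M=\A$ is, if anything, more careful than the paper, whose norm-convergence step tacitly assumes $p<\8$.

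The genuine gap is in the inclusion $\subseteq$. You correctly observe that it cannot be read off by compressing and taking limits, but what you offer in its place is only a plan --- telescoping $x$ along the net and routing each increment into an $H^{p_0}$- or $H^{p_1}$-part --- and you yourself concede that this stalls on a Ho\-lder constant $\tau(e_i)^{1/p_0-1/p}$ that blows up as $e_i\uparrow 1$, so that the finite-case decomposition ``must be replaced by a genuine analytic estimate valid in the semifinite algebra.'' That estimate does not need to be manufactured: it already exists, as the real-interpolation theorem for semifinite Hardy spaces proved in \cite{B1} (Theorem 6.3 there), and this single citation is how the paper disposes of the inclusion. Indeed, since by definition any element of $(X_0,X_1)_{\theta,p}$ lies in $X_0+X_1$, the identity $H^{p}(\A)=(H^{p_0}(\A),H^{p_1}(\A))_{\theta,p}$ (for $p_0<p<p_1$; the endpoint cases $p=p_0$ or $p=p_1$ are trivial) yields $H^p(\A)\subset H^{p_0}(\A)+H^{p_1}(\A)$ in one line, and $H^p(\A)\subset L^p(\M)$ is automatic. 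You mention the $K$-functional decomposition of \cite{B1} only as a technique to reproduce, rather than as a theorem to invoke; that inversion is exactly what leaves your argument incomplete, since the uniform decomposition you are attempting to rebuild --- including at the endpoints and in the quasi-Banach range $p<1$, where no bounded Riesz projection is available --- is precisely the content of the already-established semifinite real-interpolation ($K$-closedness) result. In short: your hard direction is the paper's easy direction, and your proposal proves only half the lemma.
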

\begin{proof} It is clear that $H^p(\A)\subseteq\left(H^{p_0}(\A)+H^{p_1}(\A)\right)\cap L^p(\M)$ (see Theorem 6.3 in \cite{B1}). Conversely, if $x\in \left(H^{p_0}(\A)+H^{p_1}(\A)\right)\cap L^p(\M)$, then
$$
e_ixe_i\in \left(e_iH^{p_0}(\A)e_i+e_iH^{p_1}(\A)e_i\right)\cap e_iL^p(\M)e_i,\quad \forall i\in I.
$$ By lemma 3.1 in \cite{B1}, we have that
$$
e_iH^{p_0}(\A)e_i=H^{p_0}(\A_{e_i}),\quad e_iH^{p_1}(\A)e_i=H^{p_1}(\A_{e_i}),\quad e_iL^p(\M)e_i=L^{p}(\M_{e_i}),\quad \forall i\in I.
$$
Hence,
$$
e_ixe_i\in \left(H^{p_0}(\A_{e_i})+H^{p_1}(\A_{e_i})\right)\cap L^{p}(\M_{e_i})\subset H^{p_0}(\A_{e_i})\cap L^{p}(\M_{e_i}),\quad \forall i\in I.
$$
Since $\A_{e_i}$ is a subdiagonal algebra of the finite von Neumann algbra $\M_{e_i}$, by Proposition 3.3 \cite{BX}, we get
$$
e_ixe_i\in H^{p}(\A_{e_i})= e_iH^{p}(\A)e_i\subset H^{p}(\A),\quad \forall i\in I.
$$
Note that since $e_{i}\rightarrow1$  strongly, we get
$\lim_{i}\|xe_{i}- x\|_{p}=0$ and $\lim_{i}\|e_{i}x-x\|_{p}=0$  (cf. Lemma 2.3
in \cite{Ju}). Therefore,
\beq\label{eq:convergence}
\lim_{i}\|x-e_{i}xe_{i}\|_{p}\leq\lim_{i}\|x-xe_{i}\|_{p}+\lim_{i}\|(x-e_{i}x)e_{i}\|_{p}=0.
\eeq
Thus $x\in H^{p}(\A)$. This gives the desired result.
\end{proof}

\begin{theorem} \label{thm:interpolation-semi-hp} Let $0<p_0,\ p_1\le\8$ and $0<\theta<1$. If  $\frac{1}{p}=\frac{1-\theta}{p_0}+\frac{\theta}{p_1}$, then
$$
(H^{p_0}(\A), H^{p_1}(\A))_\theta=H^{p}(\A)
$$
and
$$
\|x\|_p\le \|x\|_{(H^{p_0}(\A), H^{p_1}(\A))_\theta}\le C\|x\|_p,\quad \forall x\in H^{p}(\A),
$$
where $C$  depends only on $p_0, p_1$ and $\theta$.

\end{theorem}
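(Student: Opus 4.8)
The plan is to reduce the semifinite case to the already-established finite case (Theorem~\ref{interpolation-finite-hp} of Pisier/Xu) by a compression-and-limit argument using the net $\{e_i\}_{i\in I}$, exactly in the spirit of the proof of Lemma~\ref{lem:saito-type}. The two inequalities will be handled separately. The lower bound $\|x\|_p\le\|x\|_{(H^{p_0}(\A),H^{p_1}(\A))_\theta}$ should follow from general interpolation theory together with the fact that both $H^{p_0}(\A)$ and $H^{p_1}(\A)$ embed into the corresponding $L^p$-spaces, so that the interpolation couple sits inside $(L^{p_0}(\M),L^{p_1}(\M))_\theta=L^p(\M)$ with the norm of the latter dominated; the content is in the reverse inequality.

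First I would establish the inclusion $(H^{p_0}(\A),H^{p_1}(\A))_\theta\subseteq H^p(\A)$ at the level of sets. Given $x$ in the interpolation space, pick $f\in\F_0(H^{p_0}(\A),H^{p_1}(\A))$ with $f(\theta)=x$ and norm close to $\|x\|_{(H^{p_0},H^{p_1})_\theta}$. Since the boundary values lie in $H^{p_0}(\A)$ and $H^{p_1}(\A)$ respectively, both of which are contained in $H^{p_0}(\A)+H^{p_1}(\A)$, interpolation gives $x\in (H^{p_0}(\A),H^{p_1}(\A))_\theta\subseteq\bigl(H^{p_0}(\A)+H^{p_1}(\A)\bigr)\cap L^p(\M)$, which equals $H^p(\A)$ by Lemma~\ref{lem:saito-type}. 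This simultaneously yields the estimate $\|x\|_p\le C'\|x\|_{(H^{p_0},H^{p_1})_\theta}$, giving the lower bound after adjusting constants.

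For the crucial reverse inequality $\|x\|_{(H^{p_0},H^{p_1})_\theta}\le C\|x\|_p$, I would compress to the finite corners. Fix $x\in H^p(\A)$ and consider $e_ixe_i\in H^p(\A_{e_i})$, using the identifications $e_iH^q(\A)e_i=H^q(\A_{e_i})$ and $e_iL^q(\M)e_i=L^q(\M_{e_i})$ from Lemma~3.1 of \cite{B1}. Since $\M_{e_i}$ is finite and $\A_{e_i}$ is a subdiagonal algebra of it, Theorem~\ref{interpolation-finite-hp} applies and gives
$$
\|e_ixe_i\|_{(H^{p_0}(\A_{e_i}),H^{p_1}(\A_{e_i}))_\theta}\le C\|e_ixe_i\|_p,
$$
with $C$ depending only on $p_0,p_1,\theta$ and, crucially, \emph{not} on $i$. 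The key point is that compression by $e_i$ on both sides is bounded on the relevant $L^q$-spaces with norm at most one, so an admissible function $f_i$ for $e_ixe_i$ in $\M_{e_i}$ is also admissible for it inside the larger couple $(H^{p_0}(\A),H^{p_1}(\A))$; hence $\|e_ixe_i\|_{(H^{p_0}(\A),H^{p_1}(\A))_\theta}\le C\|e_ixe_i\|_p\le C\|x\|_p$.

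The main obstacle, and the final step, is passing to the limit in $i$: I must show that $e_ixe_i\to x$ in the interpolation norm, not merely in $L^p$. For this I would use that the net $e_ixe_i\to x$ in $L^p(\M)$ (as in \eqref{eq:convergence}) together with a uniform bound on $\|e_ixe_i\|_{(H^{p_0},H^{p_1})_\theta}$, and invoke completeness of the interpolation space plus the already-proved continuous embedding into $L^p(\M)$ to identify the limit. Concretely, one shows the net is Cauchy in the interpolation norm by applying the finite-case estimate to the differences $e_ixe_i-e_jxe_j$ after compressing by a common dominating projection, since the index set is directed; the $L^p$-convergence then forces the interpolation-norm limit to coincide with $x$. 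Combining the uniform bound with lower semicontinuity of the norm under this convergence yields $\|x\|_{(H^{p_0}(\A),H^{p_1}(\A))_\theta}\le C\|x\|_p$, completing the proof together with the equality of spaces.
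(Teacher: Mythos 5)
Your proposal is correct and follows essentially the same route as the paper: the lower bound and the set inclusion via Lemma~\ref{lem:saito-type} together with $(L^{p_0}(\M),L^{p_1}(\M))_\theta=L^p(\M)$, and the reverse estimate by compressing to the finite corners $\M_{e_i}$, applying the Pisier/Xu theorem with an $i$-independent constant, running a Cauchy-net argument on $e_ixe_i$ (using that the net $\{e_i\}$ is increasing so differences live in a single corner), and identifying the limit through the continuous embedding into $L^p(\M)$. No substantive differences from the paper's proof.
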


\begin{proof}
 By Theorem 4.1 in \cite{Xu1},  we have
 $(L^{p_0}(\M),
L^{p_1}(\M))_{\theta}=L^{p}(\M)$
with equal norms. Hence,
$$
(H^{p_0}(\A), H^{p_1}(\A))_\theta\subset (L^{p_0}(\M),
L^{p_1}(\M))_{\theta}=L_{p}(\M).
$$
 On the other hand,
 $$
 (H^{p_0}(\A), H^{p_1}(\A))_\theta \subset H^{p_0}(\A)+H^{p_1}(\A).
 $$
Using \eqref{eq:saito-type} we obtain that  $(H^{p_0}(\A), H^{p_1}(\A))_\theta \subset H_{p}(\A)$ and
\beq\label{eq:semip1}
\|x\|_p\le\|x\|_{(H^{p_0}(\A), H^{p_1}(\A))_\theta },\quad \forall x\in (H^{p_0}(\A), H^{p_1}(\A))_\theta .
\eeq
Recall that $\A_{e_i}$ is  a  subdiagonal algebra of the finite von Neumann algbra $\M_{e_i}\;(i\in I)$.  By Theorem \ref{interpolation-finite-hp}, we have that
 $$
(H^{p_0}(\A_{e_i}),H^{p_1}(\A_{e_i}))_\theta=H^{p}(\A_{e_i}), \quad\forall i\in I.
 $$
Hence
 $$
 H_{p}(\A_{e_i})=(H^{p_0}(\A_{e_i}),H^{p_1}(\A_{e_i}))_\theta\subset (H^{p_0}(\A), H^{p_1}(\A))_\theta
 $$
 and
 \beq\label{eq:semip2}
  \|a\|_{(H^{p_0}(\A), H^{p_1}(\A))_\theta }\le C\|a\|_p,\quad \forall a\in H_{p}(\A_{e_i}),\quad \forall i\in I.
 \eeq
 Let $a\in H^{p}(\A)$. Similar to \eqref{eq:convergence}, we deduce  that
 $$
\lim_{i}\| e_iae_i -a\|_{p}=0.
 $$
 Using \eqref{eq:semip2} we obtain that
 $$
\|e_iae_i-e_jae_j\|_{(H^{p_0}(\A), H^{p_1}(\A))_\theta }\le C\|e_iae_i-e_jae_j\|_p,\quad \forall i,j\in I,
$$
since $\{e_{i}\}_{i\in I}$ is increasing and $I$ is  a directed set.
Thus $\{e_iae_i\}_{i\in I}$ is a Cauchy net in $(H^{p_0}(\A), H^{p_1}(\A))_\theta $.
Therefore, there exists  $y$ in $(H^{p_0}(\A), H^{p_1}(\A))_\theta $ such that
$$
\lim_{i}\|e_iae_i-y\|_{(H^{p_0}(\A), H^{p_1}(\A))_\theta }=0
$$
By \eqref{eq:semip1}, $\lim_{n\rightarrow\8}\|e_iae_i-y\|_{p}=0$. Therefore, $y=a$. Thus
$$
  \|a\|_{(H^{p_0}(\A), H^{p_1}(\A))_\theta }\le C\|a\|_p,\quad \forall a\in H^{p}(\A).
$$
This completes the proof.
\end{proof}

\section{Riesz  type factorization}

Let $0<r,\;p,\;q\le\8,\;\frac{1}{r}=\frac{1}{p}+\frac{1}{q}$.
$H^p(\A)\odot H^q(\A)$ is defined as the space of all operators $x$ in $H^r (\A)$ for which there exist a  sequence $(a_n)_{n \ge 1}$ in $H^p(\A)$ and a sequence $(b_n)_{n \ge 1}$ in $H^q(\A)$ such that $\sum_{n\ge1}\|a_n\|_p^{\min\{1,r\}}\|b_n\|_q^{\min\{1,r\}}<\8$ and
$x=\sum_{n\ge1}a_nb_n$.
For each $x\in H^p(\A)\odot H^q(\A)$, we define
$$
\|x\|_{p\odot q}^{\min\{1,r\}}=\inf\left\{\sum_{n\ge1}\|a_n\|_p^{\min\{1,r\}}\|b_n\|_q^{\min\{1,r\}}\right\}.
$$
where the infimum runs over all possible factorizations of $x$ as above.

It is clear that   $ H^p(\A)\odot H^q(\A)\subset H^r(\A)$ and
\beq\label{eq:multiplication-norm}
\|x\|_r\le \|x\|_{p\odot q},\quad \forall x\in H^p(\A)\odot H^q(\A).
\eeq

\begin{proposition} Let $0<r,\;p,\;q\le\8,\;\frac{1}{r}=\frac{1}{p}+\frac{1}{q}$. If $r\ge1$ (respectively, $r<1$), then $H^p(\A)\odot H^q(\A)$ is a Banach space (respectively, quasi-Banach space).
\end{proposition}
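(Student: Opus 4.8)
The plan is to reduce the statement to a completeness assertion, after first checking that $\|\cdot\|_{p\odot q}$ is genuinely a norm (for $r\ge1$) or a quasi-norm (for $r<1$). Write $s=\min\{1,r\}$. Homogeneity of $\|\cdot\|_{p\odot q}$ is immediate from the definition, and definiteness follows from \eqref{eq:multiplication-norm}, since $\|x\|_r\le\|x\|_{p\odot q}$ forces $x=0$ whenever $\|x\|_{p\odot q}=0$. The key algebraic point is $s$-subadditivity: given factorizations $x=\sum_n a_nb_n$ and $y=\sum_m c_md_m$, their concatenation is a factorization of $x+y$, and taking infima over the two families separately gives
\[
\|x+y\|_{p\odot q}^{s}\le\|x\|_{p\odot q}^{s}+\|y\|_{p\odot q}^{s}.
\]
For $r\ge1$ this is the triangle inequality, so $\|\cdot\|_{p\odot q}$ is a norm; for $r<1$ it exhibits $\|\cdot\|_{p\odot q}$ as an $s$-norm, hence a quasi-norm. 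It then remains only to establish completeness in both cases.

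For completeness I would invoke the standard $s$-norm criterion: a space whose quasi-norm satisfies the above $s$-subadditivity is complete provided every series $\sum_k x_k$ with $\sum_k\|x_k\|_{p\odot q}^{s}<\8$ converges in it. So fix such a sequence $(x_k)_{k\ge1}$. For each $k$ pick, by the definition of $\|\cdot\|_{p\odot q}$, a factorization $x_k=\sum_{n\ge1}a_{k,n}b_{k,n}$ with $a_{k,n}\in H^p(\A)$, $b_{k,n}\in H^q(\A)$ and
\[
\sum_{n\ge1}\|a_{k,n}\|_p^{s}\|b_{k,n}\|_q^{s}\le\|x_k\|_{p\odot q}^{s}+2^{-k}.
\]
Relabelling the doubly indexed family $\{a_{k,n}b_{k,n}\}_{k,n\ge1}$ as a single sequence, its total weight satisfies
\[
\sum_{k,n}\|a_{k,n}\|_p^{s}\|b_{k,n}\|_q^{s}\le\sum_{k\ge1}\|x_k\|_{p\odot q}^{s}+1<\8.
\]

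Next I would show that this combined series sums, in $H^r(\A)$, to an element of $H^p(\A)\odot H^q(\A)$. By H\"older's inequality in $L^r(\M)$ one has $\|a_{k,n}b_{k,n}\|_r\le\|a_{k,n}\|_p\|b_{k,n}\|_q$, whence $\sum_{k,n}\|a_{k,n}b_{k,n}\|_r^{s}<\8$; since $H^r(\A)$ is a closed subspace of the complete space $L^r(\M)$ and $\|\cdot\|_r^{s}$ is subadditive, the series $\sum_{k,n}a_{k,n}b_{k,n}$ converges in $H^r(\A)$ to some $x$. By the very definition of $H^p(\A)\odot H^q(\A)$, this $x$ belongs to the space and $\|x\|_{p\odot q}^{s}$ is bounded by the total weight above. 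To identify $x$ as the limit of the partial sums of $(x_k)$ in the $p\odot q$-quasi-norm, I would write the tail
\[
x-\sum_{k=1}^{N}x_k=\sum_{k>N}\sum_{n\ge1}a_{k,n}b_{k,n},
\]
the rearrangement being legitimate because all series in sight converge in $H^r(\A)$ and $x_k=\sum_n a_{k,n}b_{k,n}$ there. The right-hand side is a factorization of the tail, so
\[
\Big\|x-\sum_{k=1}^{N}x_k\Big\|_{p\odot q}^{s}\le\sum_{k>N}\sum_{n\ge1}\|a_{k,n}\|_p^{s}\|b_{k,n}\|_q^{s}\le\sum_{k>N}\big(\|x_k\|_{p\odot q}^{s}+2^{-k}\big)\longrightarrow0
\]
as $N\to\8$, giving $\sum_{k=1}^{N}x_k\to x$ in $\|\cdot\|_{p\odot q}$ and proving completeness.

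The subadditivity and homogeneity bookkeeping are routine; the step demanding care is the last one, where one passes from convergence in the ambient space $H^r(\A)$ to convergence in the finer quasi-norm $\|\cdot\|_{p\odot q}$, i.e. one must verify that the tail of the combined double series really is a $p\odot q$-factorization of $x-\sum_{k\le N}x_k$ and therefore controls its quasi-norm. The only analytic inputs beyond the definitions are H\"older's inequality and the completeness of $L^r(\M)$.
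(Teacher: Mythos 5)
Your proof is correct and follows essentially the same route as the paper's: verify the (quasi-)norm axioms by concatenating near-optimal factorizations, then establish completeness via the absolutely convergent series criterion, re-summing the doubly indexed factorizations $x_k=\sum_n a_{k,n}b_{k,n}$ and estimating the tail. If anything, your treatment is slightly tidier than the paper's, since you handle both cases at once through $s=\min\{1,r\}$ and justify the tail bound $\|x-\sum_{k\le N}x_k\|_{p\odot q}^{s}\le\sum_{k>N}(\|x_k\|_{p\odot q}^{s}+2^{-k})$ by exhibiting an explicit factorization, where the paper invokes countable subadditivity without comment and dispatches the case $r<1$ as ``analogous.''
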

\begin{proof}
Suppose that  $r\ge1$. If $x,y\in H^p(\A)\odot H^q(\A)$, then for $\varepsilon>0$, there exist  sequences $(a_n)_{n \ge 1},\;(c_n)_{n \ge 1}$ in $H^p(\A)$ and  sequences $(b_n)_{n \ge 1},\;(d_n)_{n \ge 1}$ in $H^q(\A)$ such that
$$
\sum_{n\ge1}\|a_n\|_p\|b_n\|_q<\8,\quad\sum_{n\ge1}\|c_n\|_p\|d_n\|_q<\8,\quad x=\sum_{n\ge1}a_nb_n,\quad y=\sum_{n\ge1}c_nd_n
$$
and
$$
\sum_{n\ge1}\|a_n\|_p\|b_n\|_q<\|x\|_{p\odot q}+\frac{\varepsilon}{2},\quad \sum_{n\ge1}\|c_n\|_p\|d_n\|_q<\|y\|_{p\odot q}+\frac{\varepsilon}{2}.
$$
Hence, $x+y=\sum_{n\ge1}(a_nb_n+c_nd_n)$ and $\sum_{n\ge1}(\|a_n\|_p\|b_n\|_q+\|c_n\|_p\|d_n\|_q)<\8$. Therefore, $x+y\in H^p(\A)\odot H^q(\A)$ and
$$
\|x+y\|_{p\odot q}\le\sum_{n\ge1}\|a_n\|_p\|b_n\|_q+\sum_{n\ge1}\|c_n\|_p\|d_n\|_q<\|x\|_{p\odot q}+\|y\|_{p\odot q}+\varepsilon.
$$
Thus $\|x+y\|_{p\odot q}\le|x\|_{p\odot q}+\|y\|_{p\odot q}$.
It is trivial that
$$
\lambda x\in H^p(\A)\odot H^q(\A),\;\|\lambda x\|_{p\odot q}=|\lambda|\|x\|_{p\odot q},\quad \forall x\in H^p(\A)\odot H^q(\A),\;\forall\lambda\in\mathbb{C}.
$$
On the other hand, \eqref{eq:multiplication-norm} insures that $\|x\|_{p\odot q}>0$ for any non-zero $x\in H^p(\A)\odot H^q(\A)$. Thus $\|\;\|_{p\odot q}$ is a norm
on  $ H^p(\A)\odot H^q(\A)$.

Let us prove the completeness of the norm $\|\;\|_{p\odot q}$. Let $(x_k)_{k \ge 1}$ be a sequence in $ H^p(\A)\odot H^q(\A)$ such that $\sum_{k\ge1}\|x_k\|_{p\odot q}<\8$. By \eqref{eq:multiplication-norm}, $\sum_{k\ge1}\|x_k\|_{r}<\8$. It follows that $\sum_{k=1}x_k\in H^r (\A)$. Set $x=\sum_{k\ge1}x_k$. Let $\varepsilon>0$ be given and select a sequence $(a_n^{(k)})_{n \ge 1}$ in $H^p(\A)$ and  a sequence $(b_n^{(k)})_{n \ge 1}$ in $H^q(\A)$ such that
$x_k=\sum_{n\ge1}a_n^{(k)}b_n^{(k)}$ and
$$
\sum_{n\ge1}\|a_n^{(k)}\|_p\|b_n^{(k)}\|_q<\|x_k\|_{p\odot q}+\frac{\varepsilon}{2^k}
$$
 for every $k$. Since
 $$
 \sum_{k\ge1}\sum_{n\ge1}\|a_n^{(k)}\|_p\|b_n^{(k)}\|_q\le\sum_{k\ge1}\|x_k\|_{p\odot q}+\varepsilon<\8,
 $$
 we get
 $x=\sum_{k\ge1}\sum_{n\ge1}a_n^{(k)}b_n^{(k)}$, and so $x\in H^p(\A)\odot H^q(\A)$. On the other hand,
$$
\|x-\sum_{k=1}^Nx_k\|_{p\odot q}=\|\sum_{k\ge N+1}x_k\|_{p\odot q}\le\sum_{k\ge N+1}\|x_k\|_{p\odot q}\rightarrow 0\quad\mbox{as}\;N\rightarrow\8.
$$
Thus $x=\sum_{k\ge1}x_k$ in $H^p(\A)\odot H^q(\A)$.

The case $0<r<1$ follows analogously.

\end{proof}

\begin{theorem}\label{thm:hp-multiplication} Let $0<r,\;p,\;q\le\8,\;\frac{1}{r}=\frac{1}{p}+\frac{1}{q}$. Then
\beq
H^r(\A)=H^p(\A)\odot H^q(\A).
\eeq
\end{theorem}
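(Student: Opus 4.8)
The plan. The inclusion $H^p(\A)\odot H^q(\A)\subseteq H^r(\A)$ together with the estimate $\|x\|_r\le\|x\|_{p\odot q}$ is already recorded in \eqref{eq:multiplication-norm}, so the entire content of the theorem is the reverse inclusion together with the matching bound $\|x\|_{p\odot q}\le\|x\|_r$; establishing both yields $H^r(\A)=H^p(\A)\odot H^q(\A)$ with equal quasi-norms. The case $r=\8$ (which forces $p=q=\8$) is trivial, since $1\in\D\subseteq\A$ gives $x=x\cdot 1$ for every $x\in\A=H^\8(\A)$, so I assume $r<\8$ from now on and write $\delta=\min\{1,r\}$.

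The strategy is exactly the localize-to-the-finite-corners-and-pass-to-the-limit scheme already used in Lemma~\ref{lem:saito-type} and Theorem~\ref{thm:interpolation-semi-hp}. Fix $x\in H^r(\A)$ and $\e>0$. Using that $e_i\to 1$ strongly implies $\lim_i\|e_ixe_i-x\|_r=0$ (the computation \eqref{eq:convergence} with $p$ replaced by $r<\8$), I would first select an increasing sequence $i_1\le i_2\le\cdots$ from the net $\{e_i\}_{i\in I}$ — possible because $I$ is directed and each required index can be taken above its predecessor — such that, with $x_k:=e_{i_k}xe_{i_k}$, one has $\|x-x_k\|_r\le\e\,2^{-k}$ for all $k\ge1$. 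Since $e_{i_k}\le e_{i_{k+1}}$, each $x_k$ lies in $H^r(\A_{e_{i_{k+1}}})=e_{i_{k+1}}H^r(\A)e_{i_{k+1}}$ (Lemma 3.1 in \cite{B1}), and hence so does every difference $x_{k+1}-x_k$; likewise $x_1\in H^r(\A_{e_{i_1}})$. Because each $\A_{e_{i_k}}$ is a subdiagonal algebra of the \emph{finite} von Neumann algebra $\M_{e_{i_k}}$, the finite-case Riesz factorization (the general-exponent version proved in \cite{BX}) applies inside each corner: I can write $x_1=a_0b_0$ and $x_{k+1}-x_k=a_kb_k$ with $a_k\in H^p(\A_{e_{i_{k+1}}})\subseteq H^p(\A)$, $b_k\in H^q(\A_{e_{i_{k+1}}})\subseteq H^q(\A)$, and
$$
\|a_0\|_p\|b_0\|_q\le\|x_1\|_r+\e,\qquad \|a_k\|_p\|b_k\|_q\le\|x_{k+1}-x_k\|_r+\e\,2^{-k}\quad(k\ge1).
$$

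It then remains to assemble these pieces. The telescoping identity $x_1+\sum_{k=1}^{N-1}(x_{k+1}-x_k)=x_N\to x$ in $L^r(\M)$ shows $x=\sum_{k\ge0}a_kb_k$ with convergence in $H^r(\A)$. For the quasi-norm bound I would use $\|x_1\|_r\le\|x\|_r$ together with the $\delta$-triangle inequality $\|a+b\|_r^{\delta}\le\|a\|_r^{\delta}+\|b\|_r^{\delta}$ (ordinary triangle inequality when $r\ge1$, and $r$-subadditivity of $\|\cdot\|_r^r$ when $0<r<1$), which from $\|x-x_k\|_r\le\e\,2^{-k}$ gives $\|x_{k+1}-x_k\|_r^{\delta}\le 2\e^{\delta}2^{-\delta k}$; raising the factorization bounds to the $\delta$-th power, summing, and using $t\mapsto t^{\delta}$ subadditive yields
$$
\|x\|_{p\odot q}^{\delta}\le\sum_{k\ge0}\big(\|a_k\|_p\|b_k\|_q\big)^{\delta}\le\|x\|_r^{\delta}+C\,\e^{\delta},
$$
with $C$ an absolute constant. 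Letting $\e\to0$ gives $\|x\|_{p\odot q}\le\|x\|_r$, which combined with \eqref{eq:multiplication-norm} produces equality of quasi-norms and the asserted identity. The only genuinely delicate point is this last estimate: the telescoping differences must be made to decay fast enough that $\sum_{k\ge1}\|x_{k+1}-x_k\|_r^{\delta}$ is of order $\e^{\delta}$ rather than merely finite, which is why I insist on the geometric approximation $\|x-x_k\|_r\le\e\,2^{-k}$ for \emph{every} $k$ (including $k=1$, so that $\|x_1\|_r$ already carries the bulk of the norm); the $\delta$-subadditivity then treats the regime $0<r<1$ uniformly with $r\ge1$.
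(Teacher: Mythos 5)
Your proposal is correct, and its skeleton is the same as the paper's: localize to the finite corners via Lemma 3.1 of \cite{B1} (so that $e_iH^r(\A)e_i=H^r(\A_{e_i})$), invoke the finite-case Riesz factorization (Theorem 3.4 in \cite{BX}) inside each corner, and use $\lim_i\|x-e_ixe_i\|_r=0$ to pass to the general element. Where you genuinely diverge is the assembly step. The paper proves separately that $\left(H^p(\A)\odot H^q(\A),\ \|\cdot\|_{p\odot q}\right)$ is complete (its Proposition 3.1), shows the corner isometry $\|x_i\|_{p\odot q}=\|x_i\|_r$ on $e_iH^r(\A)e_i$, and then runs the same Cauchy-net argument as in Theorem \ref{thm:interpolation-semi-hp}: $\{e_ixe_i\}_{i\in I}$ is $\|\cdot\|_{p\odot q}$-Cauchy because differences of corners lie in a single corner, its limit exists by completeness, and is identified with $x$ through the continuous embedding \eqref{eq:multiplication-norm}. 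You instead extract an increasing sequence $i_1\le i_2\le\cdots$ from the directed net with geometric approximation $\|x-x_k\|_r\le\e 2^{-k}$, telescope, factor each difference in a corner, and sum the factorizations with the $\delta$-subadditivity bookkeeping ($\delta=\min\{1,r\}$); the resulting series $x=\sum_k a_kb_k$ with $\sum_k(\|a_k\|_p\|b_k\|_q)^{\delta}\le\|x\|_r^{\delta}+C\e^{\delta}$ is \emph{itself} a witness of membership in $H^p(\A)\odot H^q(\A)$ by the very definition of that space. This buys you independence from the completeness proposition (your route never needs it), gives $\|x\|_{p\odot q}\le\|x\|_r$ — hence, with \eqref{eq:multiplication-norm}, the isometric identity — in one stroke, and handles explicitly two points the paper leaves tacit: the trivial case $r=\8$ (where $p=q=\8$ and $x=x\cdot 1$), and the legitimacy of choosing an increasing sequence inside the net. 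The paper's route, conversely, is shorter once Proposition 3.1 is in hand and avoids your $\e 2^{-k}$ bookkeeping. Two cosmetic remarks: your constant $C$ is not absolute but depends on $r$ through $\delta$ (namely $C=1+3(2^{\delta}-1)^{-1}$ works), which is harmless since you let $\e\to0$; and the fact $\lim_i\|x-e_ixe_i\|_r=0$ for $0<r<1$ requires the quasi-triangle version of \eqref{eq:convergence}, exactly as the paper's own case $r<1$ does.
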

\begin{proof} We prove only the case $r\ge1$. The proof of the case $r<1$ is
similar.
By Lemma 3.1 in \cite{B1}, we know that $e_{i}H^r(\A)e_{i}=H^r(\A_{e_i})\;( i\in I)$. Since $\mathcal{A}_{e_i}$ is a subdiagonal algebra of the finite von Neumann algebra
 $\mathcal{M}_{e_i}$, by Theorem 3.4 in \cite{BX}, for $x_i\in H^r(\A_{e_i})$ and $\e>0$ there exist $y_\e\in H^p(\A_{e_i})$ and
$z_\e\in H^q(\A_{e_i})$ such that
 $$x_i=y_\e z_\e\quad\mbox{and}\quad \|y_\e\|_p\,\|z_\e\|_q\le \|x_i\|_r+\e.$$
Hence, $x_i\in H^p(\A)\odot H^q(\A)$ and
$$
\|x_i\|_{p\odot q}\le \inf_{\e}\|y_\e\|_p \|z_\e\|_q=\|x_i\|_r,
$$
so $e_{i}H^r(\A)e_{i}\subset H^p(\A)\odot H^q(\A)\,( i\in I)$.
Using \eqref{eq:multiplication-norm} we obtain that
\beq\label{equal:multiplication-norm}
\|x_i\|_{p\odot q}= \|x_i\|_r,\quad \forall x_i\in e_{i}H^r(\A)e_{i},\quad \forall i\in I.
\eeq

Let $x\in H^{r}(\A)$.  Similar to \eqref{eq:convergence}, we have that
$$
\lim_{i}\|x-e_{i}xe_{i}\|_{r}=0.
$$
Notice that $\{e_{i}\}_{i\in I}$ is increasing, so $e_{i}xe_{i}-e_{j}xe_{j}\in e_{i}H^r(\A)e_{i}$ for any $i\ge j$. Applying \eqref{equal:multiplication-norm} we obtain that
$$
\|e_{i}xe_{i}-e_{j}xe_{j}\|_{p\odot q}=\|e_{i}xe_{i}-e_{j}xe_{j}\|_r,\quad i\ge j.
$$
Similar to the proof of Theorem \ref{thm:interpolation-semi-hp}, we  obtain  that
 $H^p(\A)\odot H^q(\A)= H^{r}(\A)$.
\end{proof}

By Theorem \ref{thm:interpolation-semi-hp} and \ref{thm:hp-multiplication}, we obtain the following result.

\begin{corollary} Let $0<p_0,\ p_1\le\8$ and $0<\theta<1$. Then
\be
(H^{p_0}(\A),H_{p_1}(\A)_\theta =H^{\frac{p_0}{1-\theta}}(\A)\odot H^{\frac{p_1}{\theta}}(\A)
\ee
with equivalent norms, where $\frac{1}{p}=\frac{1-\theta}{p_0}+\frac{\theta}{p_1}$.
\end{corollary}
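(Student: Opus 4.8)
The plan is to read off the corollary as a direct composition of the two main results already established, Theorem \ref{thm:interpolation-semi-hp} and Theorem \ref{thm:hp-multiplication}, so that no new machinery is required. First I would fix the notation $q=\frac{p_0}{1-\theta}$ and $s=\frac{p_1}{\theta}$ and record the elementary identity
\beq
\frac{1}{q}+\frac{1}{s}=\frac{1-\theta}{p_0}+\frac{\theta}{p_1}=\frac{1}{p},
\eeq
which shows that the triple $(p,q,s)$ satisfies exactly the H\"older-type relation needed to invoke the Riesz type factorization. Note that this holds for all admissible $0<p_0,p_1\le\8$ and $0<\theta<1$, so both the $r\ge1$ and $r<1$ regimes covered by Theorem \ref{thm:hp-multiplication} are automatically included.

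With this in hand, I would apply Theorem \ref{thm:hp-multiplication} (with the roles $r,p,q$ there played by $p,q,s$ here) to obtain the identity
\beq
H^{p}(\A)=H^{q}(\A)\odot H^{s}(\A)=H^{\frac{p_0}{1-\theta}}(\A)\odot H^{\frac{p_1}{\theta}}(\A).
\eeq
Simultaneously, Theorem \ref{thm:interpolation-semi-hp} gives $(H^{p_0}(\A),H^{p_1}(\A))_\theta=H^{p}(\A)$ together with the two-sided estimate $\|x\|_p\le\|x\|_{(H^{p_0}(\A),H^{p_1}(\A))_\theta}\le C\|x\|_p$. Chaining the two identities through the common space $H^{p}(\A)$ yields the asserted equality of spaces, while combining the norm equivalence from the interpolation theorem with the bound $\|x\|_r\le\|x\|_{p\odot q}$ from \eqref{eq:multiplication-norm} and the reverse estimate furnished by the factorization delivers the equivalence of norms.

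I do not expect any serious obstacle: the entire content of the corollary is carried by the two theorems it cites, and the argument is purely formal. The only point demanding a little care is the bookkeeping of the equivalence constants, namely checking that the constant $C$ from Theorem \ref{thm:interpolation-semi-hp} and the factorization constants from Theorem \ref{thm:hp-multiplication} combine into a single equivalence constant depending only on $p_0$, $p_1$ and $\theta$. This is immediate, since each intermediate estimate already exhibits precisely that dependence.
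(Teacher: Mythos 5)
Your proposal is correct and coincides with the paper's own argument: the paper derives this corollary directly by combining Theorem \ref{thm:interpolation-semi-hp} (which identifies $(H^{p_0}(\A),H^{p_1}(\A))_\theta$ with $H^p(\A)$ up to the constant $C=C(p_0,p_1,\theta)$) with Theorem \ref{thm:hp-multiplication} applied to the exponents $\frac{p_0}{1-\theta}$ and $\frac{p_1}{\theta}$, exactly as you do via the identity $\frac{1-\theta}{p_0}+\frac{\theta}{p_1}=\frac{1}{p}$. Your bookkeeping of the constants is also consistent with the paper, since the proof of Theorem \ref{thm:hp-multiplication} in fact yields $\|x\|_{p\odot q}=\|x\|_r$ on the truncations $e_iH^r(\A)e_i$ and hence, by the limiting argument, isometric agreement of the factorization norm with $\|\cdot\|_p$, so the only constant entering the equivalence is the interpolation constant $C$.
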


\section{Application}

In Section 4 of \cite{MW2},  Marsalli has shown that if $x\in \mathrm{Re}(\A)$  then there exists a uniquely
determined $\tilde{x}\in \mathrm{Re}(\A)$  such that $x+i\tilde{x}\in \A$ and $\E(\tilde{x})=0$. The  map $\sim \mathrm{Re}(\A)\rightarrow \mathrm{Re}(\A)$ is a real linear map and it is called the conjugation map. If $1<p<\8$, one can extend the conjugation map $\sim \mathrm{Re}(\A)\rightarrow \mathrm{Re}(\A)$ to a bounded real linear
map $\sim: L^p(\M)^{sa}\rightarrow L^p(\M)^{sa}$. Its complexification $\overline{\sim}$  is
a bounded operator on $L^p(\M)$ (see Theorem 5.4 in \cite{MW} or Theorem 2 in \cite{R}). Note that if $x \in L^p(\M)$, then
$x+i\overline{\widetilde{x}}\in H^p(\A)$ and $\E(\overline{\widetilde{x}})=0$. In section 4 of \cite{B1}, this result is extended to the semifinite  case.

For $x,y\in\M$, we define
\be
x+\overline{\widetilde{y}}=\{(u,v):\;u,v\in\M, e_ixe_i+\overline{\widetilde{e_iye_i}}=e_iue_i+\overline{\widetilde{e_ive_i}},\;\forall i\in I\}.
\ee
 Let $BMO(\M)$ be the
completion of the set $ \{x+\overline{\widetilde{y}}:\; x,y \in \mathcal{M}\}$
 with norm
 \be
\|x+\overline{\widetilde{y}}\|_{BMO}=\inf\left\{\|u\|_{\infty}+\|v\|_{\infty}:\; (u,v) \in x+\overline{\widetilde{y}}\right\}.
\ee
Then
\beq\label{dual-H^1}
H^{1}(\mathcal{A})^*=BMO(\mathcal{M})
\eeq
under the pairing
  $$
\langle a,\;x+\overline{\widetilde{y}}\rangle=\lim_{i}\tau(a(e_{i}xe_{i}+\overline{\widetilde{e_{i}ye_{i}}})^{\ast}),\quad \forall  a\in H^{1}(\mathcal{A}),\;\forall x,\; y\in\M.
  $$
This identification satisfies
\beq\label{dual-H^1}
\|x+\overline{\widetilde{y}}\|_{BMO}\le \|x+\overline{\widetilde{y}}\|_{H^{1}(\mathcal{A})^*}\le2\|x+\overline{\widetilde{y}}\|_{BMO}.
\eeq
For more details, see \cite{B1}.

We use the duality theorem, the reiteration theorem (Theorem 4.5.1, Theorem 4.6.1 in \cite{BL}) and Wolff's theorem (Theorem 2 in \cite{W}) to obtain

\begin{proposition}\label{inter:H-BMO} Let $0<p_0\le p<\8$ and $\frac{1}{p}=\frac{1-\theta}{p_0}$. Then
\be
H^p(\A)=(H^{p_0}(\A),BMO(\M))_\theta
\ee
with equivalent norms.
\end{proposition}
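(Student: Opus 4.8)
The plan is to reduce the whole statement to the already-established interpolation identity for the Hardy scale, Theorem~\ref{thm:interpolation-semi-hp}, by using $BMO(\M)$ as a substitute for the endpoint $H^\8(\A)=\A$; the hypothesis $\frac1p=\frac{1-\theta}{p_0}$ is exactly the exponent relation produced by that theorem with the endpoint ``$p_1=\8$''. One inclusion is essentially free: since $\A=H^\8(\A)\hookrightarrow BMO(\M)$ contractively (any $x\in\A$ equals $x+\overline{\widetilde 0}$, so $\|x\|_{BMO}\le\|x\|_\8$), monotonicity of the complex method in its second argument gives $H^p(\A)=(H^{p_0}(\A),H^\8(\A))_\theta\hookrightarrow(H^{p_0}(\A),BMO(\M))_\theta$, whence $\|x\|_{(H^{p_0}(\A),BMO(\M))_\theta}\le C\|x\|_p$ for every $p$. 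The substance is the reverse embedding, which I would obtain in three stages organized by the sizes of $p_0$ and $p$.

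\emph{Stage 1 (Banach range $1<p_0<p<\8$, via duality).} I would apply the duality theorem, Theorem~4.5.1 in \cite{BL}, to the couple $(H^{p_0'}(\A),H^1(\A))$, where $p_0'$ is the conjugate index. Here $H^{p_0'}(\A)$ is reflexive and $H^{p_0'}(\A)\cap H^1(\A)$ is dense in each factor (the truncations $e_ixe_i$ lie in $H^{p_0'}(\A)\cap H^1(\A)$ and converge, exactly as in the proof of Theorem~\ref{thm:interpolation-semi-hp}), so
$$\big(H^{p_0'}(\A),H^1(\A)\big)_\theta^{*}=\big((H^{p_0'}(\A))^{*},(H^1(\A))^{*}\big)_\theta=\big(H^{p_0}(\A),BMO(\M)\big)_\theta,$$
using $H^1(\A)^{*}=BMO(\M)$ together with the semifinite $H^p$-duality $(H^{p_0'}(\A))^{*}=H^{p_0}(\A)$ (cf. \cite{B1}). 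By Theorem~\ref{thm:interpolation-semi-hp}, $(H^{p_0'}(\A),H^1(\A))_\theta=H^q(\A)$ with $\frac1q=\frac{1-\theta}{p_0'}+\theta$, and a direct check gives $q'=p$; dualising once more yields $(H^{p_0}(\A),BMO(\M))_\theta=H^q(\A)^{*}=H^p(\A)$.

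\emph{Stages 2 and 3 (small exponents, via Wolff and reiteration).} For $0<p_0\le1$ with output exponent still $>1$, I would apply Wolff's theorem (Theorem~2 in \cite{W}) to the four spaces $H^{p_0}(\A),H^{q_0}(\A),H^{q_1}(\A),BMO(\M)$ with $1<q_0<q_1<\8$: Theorem~\ref{thm:interpolation-semi-hp} supplies $H^{q_0}(\A)=(H^{p_0}(\A),H^{q_1}(\A))_\varphi$ while Stage~1 supplies $H^{q_1}(\A)=(H^{q_0}(\A),BMO(\M))_\psi$, and Wolff returns both middle spaces, giving $(H^{p_0}(\A),BMO(\M))_\vartheta=H^{q}(\A)$ for all $\vartheta\in(1-p_0,1)$ (equivalently all $q>1$) once the exponent bookkeeping is matched to $\frac1q=\frac{1-\vartheta}{p_0}$. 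To reach the remaining case $p\le1$ (i.e. $\theta\le1-p_0$), I would fix $\theta_1\in(\max(\theta,1-p_0),1)$ and set $H^{p_1}(\A)=(H^{p_0}(\A),BMO(\M))_{\theta_1}$ from the previous step, so $p_1>1$; the reiteration theorem, Theorem~4.6.1 in \cite{BL}, applied to the couple $(H^{p_0}(\A),BMO(\M))$ with intermediate parameters $0$ and $\theta_1$, gives $(H^{p_0}(\A),H^{p_1}(\A))_\lambda=(H^{p_0}(\A),BMO(\M))_{\lambda\theta_1}$. Choosing $\lambda=\theta/\theta_1$ makes the right side the target space, while the left side equals $H^p(\A)$ by Theorem~\ref{thm:interpolation-semi-hp}, and substituting $\frac1{p_1}=\frac{1-\theta_1}{p_0}$ into $\frac1p=\frac{1-\lambda}{p_0}+\frac{\lambda}{p_1}$ confirms $\frac1p=\frac{1-\theta}{p_0}$.

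The hard part, I expect, is not the exponent arithmetic but the functional-analytic prerequisites at the two non-elementary stages. In Stage~1 one must have the semifinite $H^p$-duality $(H^{p_0'}(\A))^{*}=H^{p_0}(\A)$ with the correct trace pairing and the density of $H^{p_0'}(\A)\cap H^1(\A)$ in each factor, so that Theorem~4.5.1 genuinely applies. In Stages~2 and 3 the delicate point is that the endpoint $H^{p_0}(\A)$ with $p_0<1$ is only quasi-Banach, which lies outside the Banach formulations of \cite{BL,W}; this forces one to run Wolff's theorem and reiteration within the quasi-Banach complex method developed in Section~2, and to verify the attendant density hypotheses (that $H^{p_0}(\A)\cap BMO(\M)$ is dense in the relevant intermediate spaces) before the two theorems can be invoked.
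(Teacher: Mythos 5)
Your proposal follows exactly the route the paper itself takes: the paper gives no detailed proof of Proposition~\ref{inter:H-BMO}, saying only that it follows from the duality theorem (Theorem~4.5.1 in \cite{BL}), the reiteration theorem (Theorem~4.6.1 in \cite{BL}) and Wolff's theorem (Theorem~2 in \cite{W}), which are precisely the three tools organizing your Stages~1--3, and your exponent arithmetic ($q'=p$ in Stage~1, $\vartheta>1-p_0$ for Wolff, $\lambda=\theta/\theta_1$ in reiteration) checks out. Your write-up is in fact more detailed than the paper's, and the issues you flag honestly --- the quasi-Banach formulations of Wolff and reiteration needed when $p_0<1$, plus a small fix in Stage~1 (for density of $H^{p_0'}(\A)\cap H^1(\A)$ one should truncate elements of $\A\cap L^1(\M)$ rather than general elements of $H^1(\A)$, since $e_ixe_i$ for $x\in H^1(\A)$ need not lie in $L^{p_0'}(\M)$) --- are points the paper leaves equally unaddressed.
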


\begin{lemma} Let $T:\;BMO(\M)\rightarrow BMO(\M)$ be defined by $T(x+\overline{\widetilde{y}})=\E(y)-y+\overline{\widetilde{x}}$. Then $T$ is a bounded linear map
with norm at most 2.
\end{lemma}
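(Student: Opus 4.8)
The plan is to recognize $T$ as the conjugation operator on $BMO(\M)$ --- the semifinite analogue of the classical boundedness of the conjugate function on $BMO$ --- and to bound it by producing, for each element $x+\overline{\widetilde{y}}$, an explicit representative of its image whose $\M$-data is controlled up to the factor $2$. Concretely, $T$ sends the class represented by the pair $(x,y)$ to the class represented by $(\E(y)-y,\,x)$, so the whole argument reduces to (a) checking that this assignment does not depend on the representative and (b) estimating the norm of the pair $(\E(y)-y,\,x)$.

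First I would isolate the algebraic identity that makes $T$ well defined, namely the second-conjugate formula
\beq\label{eq:sketch-second-conj}
\overline{\widetilde{\overline{\widetilde{w}}}}=\E(w)-w .
\eeq
Since every conjugation in the definition of $BMO(\M)$ is performed inside a finite corner $\M_{e_i}=e_i\M e_i$, it is enough to establish \eqref{eq:sketch-second-conj} there. Writing $\widetilde{\cdot}$ for the conjugation on the self-adjoint part, a self-adjoint $w$ satisfies $w+i\widetilde{w}\in\A_{e_i}$ with $\E(\widetilde{w})=0$, and likewise $\widetilde{w}+i\widetilde{\widetilde{w}}\in\A_{e_i}$; combining these (multiply the first by $-i$ and subtract it from the second) gives $i(\widetilde{\widetilde{w}}+w)\in\A_{e_i}$, hence $\widetilde{\widetilde{w}}+w\in\A_{e_i}$. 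As $\widetilde{\widetilde{w}}+w$ is self-adjoint, it lies in $\A_{e_i}\cap J(\A_{e_i})=\D_{e_i}$ and is therefore fixed by $\E$; applying $\E$ and using $\E(\widetilde{\widetilde{w}})=0$ yields $\widetilde{\widetilde{w}}+w=\E(w)$. Splitting a general $w$ into self-adjoint and anti-self-adjoint parts and complexifying gives \eqref{eq:sketch-second-conj}.

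Next I would check that $T$ respects the equivalence relation. If $(x,y)$ and $(u,v)$ represent the same element, then $e_ixe_i+\overline{\widetilde{e_iye_i}}=e_iue_i+\overline{\widetilde{e_ive_i}}$ for every $i$, which rearranges to $e_i(x-u)e_i=\overline{\widetilde{e_i(v-y)e_i}}$. Applying $\overline{\widetilde{\cdot}}$ once more, then invoking \eqref{eq:sketch-second-conj} together with the module identity $\E(e_ize_i)=e_i\E(z)e_i$ (which holds because $e_i\in\D$), I obtain $\overline{\widetilde{e_i(x-u)e_i}}=e_i\E(v-y)e_i-e_i(v-y)e_i$. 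This is precisely the relation expressing that $(\E(y)-y,\,x)$ and $(\E(v)-v,\,u)$ lie in the same class, so $T$ is well defined; linearity on representatives is then immediate.

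Finally, the norm bound is routine. The image $T(x+\overline{\widetilde{y}})$ has $(\E(y)-y,\,x)$ as a representative, so the definition of the $BMO$-norm and the contractivity $\|\E\|_{\M\to\M}\le1$ give
\beqn
\|T(x+\overline{\widetilde{y}})\|_{BMO}\le\|\E(y)-y\|_\infty+\|x\|_\infty\le 2\|y\|_\infty+\|x\|_\infty\le 2\big(\|x\|_\infty+\|y\|_\infty\big).
\eeqn
Taking the infimum over all representatives of $x+\overline{\widetilde{y}}$ yields $\|T(x+\overline{\widetilde{y}})\|_{BMO}\le 2\,\|x+\overline{\widetilde{y}}\|_{BMO}$, and $T$ extends by continuity to the completion $BMO(\M)$ with the same bound. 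I expect the only real obstacle to be the well-definedness step: one must keep track of the fact that all conjugations live in the finite corners and that \eqref{eq:sketch-second-conj} interacts correctly with the compressions $e_i\cdot e_i$ through the module property of $\E$. Once that is in place, the estimate above is essentially immediate.
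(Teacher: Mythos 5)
Your overall route is the same as the paper's: show that whenever $(u,v)$ represents $x+\overline{\widetilde{y}}$, the pair $(\E(v)-v,\,u)$ represents $\E(y)-y+\overline{\widetilde{x}}$ (which settles well-definedness and linearity at once), and then bound $\|\E(v)-v\|_\infty+\|u\|_\infty\le 2(\|u\|_\infty+\|v\|_\infty)$ and take the infimum over representatives. The paper does exactly this, the only difference being that it imports the corner identity $\overline{\widetilde{\overline{\widetilde{w}}}}=\E(w)-w$ from Lemma 5.1 and Proposition 5.3 of Marsalli--West, whereas you derive it by hand.

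That inline derivation, however, contains a step that fails as written. For a general self-adjoint $w\in\M_{e_i}$ you assert $w+i\widetilde{w}\in\A_{e_i}$; this holds only when $w\in\mathrm{Re}(\A_{e_i})$. For an arbitrary bounded self-adjoint $w$ the conjugate $\widetilde{w}$ need not be bounded --- the unboundedness of conjugation on $L^\infty$ is precisely why $BMO$ appears in this theory at all --- so the correct statement is $w+i\widetilde{w}\in H^2(\A_{e_i})$ with $\E(\widetilde{w})=0$, where $\widetilde{\ \cdot\ }$ and $\E$ denote the $L^2$-extensions; note also that your second membership $\widetilde{w}+i\widetilde{\widetilde{w}}\in\A_{e_i}$ already requires the $L^2$ conjugation, since $\widetilde{w}$ is merely an $L^2$ element. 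The repair is routine but changes the tools: your algebra then gives that $\widetilde{\widetilde{w}}+w$ is a self-adjoint element of $H^2(\A_{e_i})$, and in place of $\A_{e_i}\cap J(\A_{e_i})=\D_{e_i}$ you must invoke $H^2(\A_{e_i})\cap J(H^2(\A_{e_i}))=L^2(\D_{e_i})$, together with the fact that the extended $\E$ fixes $L^2(\D_{e_i})$ and kills $\widetilde{\widetilde{w}}$; this yields $\widetilde{\widetilde{w}}=\E(w)-w$, which is essentially the content of the Marsalli--West results the paper cites instead of reproving. With that correction, the rest of your argument --- the rearrangement using linearity of conjugation and the module property $\E(e_ize_i)=e_i\E(z)e_i$, followed by the norm estimate and the infimum --- coincides with the paper's proof, including the constant $2$ coming from $\|\E(v)-v\|_\infty\le 2\|v\|_\infty$.
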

\begin{proof} Let $(u,v)\in x+\overline{\widetilde{y}}$. Then $e_ixe_i+\overline{\widetilde{e_iye_i}}=e_iue_i+\overline{\widetilde{e_ive_i}}$ for all $i\in I$.
Since $e_iye_i,e_ive_i\in L^1(M_{e_i})$, by Lemma 5.1 and Proposition 5.3 in \cite{MW},
\be
\overline{\widetilde{e_ixe_i}}+e_i(\E(y)-y)e_i=\overline{\widetilde{e_iue_i}}+e_i(\E(v)-v)e_i,\;\forall i\in I.
\ee
Thus $(\E(v)-v,u)\in \E(y)-y+\overline{\widetilde{x}}$. Hence,
\be
\|\E(y)-y+\overline{\widetilde{x}}\|_{BMO}\le\|\E(v)-v\|_\8+\|u\|_\8 \le2\|v\|_\8+\|u\|_\8,
\ee
and so $\|T(x+\overline{\widetilde{y}})\|_{BMO}\le2\|x+\overline{\widetilde{y}}\|_{BMO}$. Thus the proof is complete.
\end{proof}

Since $T$ is a continuous liner extension of $\overline{\sim}$, $T$ will be denoted still by $\overline{\sim}$.

Let $P : L^2(\M) \rightarrow H^2(\M)$ be the canonical projection. Then
\be
P(x)=\frac{x+\E(x)+i\overline{\widetilde{x}}}{2},\qquad \forall x\in L^2(\M).
\ee
\begin{lemma}\label{lemm:M-BMO}
Let $P': \M\rightarrow BMO(\M)$ be the map taking $x$ to $\frac{x+\E(x)+i\overline{\widetilde{x}}}{2}$. Then
$P'$ is a continuous linear extension of $P$, still denoted by $P$.
\end{lemma}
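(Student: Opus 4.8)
The plan is to check successively that $P'$ takes values in $BMO(\M)$, that it is linear and bounded, and finally that it restricts to $P$ on the common domain $\M\cap L^2(\M)$; this last point is what ``extension'' means here, since in the semifinite setting $\M\not\subseteq L^2(\M)$.

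To see that $P'(x)\in BMO(\M)$, I would first put the defining formula into the standard generating form. As $\overline{\sim}$ is complex linear, $\frac{i}{2}\,\overline{\widetilde{x}}=\overline{\widetilde{(ix/2)}}$, so
$$
P'(x)=\frac{x+\E(x)}{2}+\overline{\widetilde{\,\tfrac{i}{2}x\,}}=u+\overline{\widetilde{w}},\qquad u=\tfrac12\big(x+\E(x)\big)\in\M,\quad w=\tfrac{i}{2}x\in\M .
$$
(For $x\in\M$ the symbol $\overline{\widetilde{x}}$ need not be an operator in $\M$, which is exactly why the target is $BMO(\M)$ rather than $\M$; but $u+\overline{\widetilde{w}}$ is by definition an element of $BMO(\M)$.) Linearity of $P'$ is then immediate, since the identity map, $\E$, and $\overline{\sim}$ are all complex linear. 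For boundedness I would use the representative $(u,w)$ in the definition of the $BMO$-norm together with the contractivity $\|\E(x)\|_{\8}\le\|x\|_{\8}$ of the conditional expectation:
$$
\|P'(x)\|_{BMO}\le\|u\|_{\8}+\|w\|_{\8}=\tfrac12\|x+\E(x)\|_{\8}+\tfrac12\|x\|_{\8}\le\tfrac32\|x\|_{\8},
$$
so $P'$ is bounded with norm at most $3/2$.

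The step I expect to be the crux is the identification $P'=P$ on $\M\cap L^2(\M)$, carried out inside the duality $H^1(\A)^*=BMO(\M)$. Fix $x\in\M\cap L^2(\M)$; then $u,w\in L^2(\M)$, and by the argument used for \eqref{eq:convergence} together with the boundedness of $\overline{\sim}$ on $L^2(\M)$ we get
$$
e_iue_i+\overline{\widetilde{e_iwe_i}}\;\longrightarrow\;u+\overline{\widetilde{w}}=P(x)\qquad\text{in }L^2(\M).
$$
Now the subspace $\{e_iae_i:\,a\in\A\cap L^1(\M),\ i\in I\}$ is contained in $L^2(\M)$ (each $e_iae_i$ is bounded with $\tau$-finite support) and is dense in $H^1(\A)$, so $H^1(\A)\cap L^2(\M)$ is dense in $H^1(\A)$. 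For such an $a$, Cauchy--Schwarz and the $L^2$-convergence above give
$$
\langle a,P'(x)\rangle=\lim_i\tau\big(a(e_iue_i+\overline{\widetilde{e_iwe_i}})^{\ast}\big)=\tau\big(aP(x)^{\ast}\big),
$$
which is precisely the action of the element $P(x)\in H^2(\A)\subseteq L^2(\M)$ on $a$. Since both $P'(x)$ and $P(x)$ define bounded functionals on $H^1(\A)$ that agree on the dense subspace $H^1(\A)\cap L^2(\M)$, they coincide, i.e. $P'(x)=P(x)$.

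Combining the three points, $P'$ is a bounded linear map $\M\to BMO(\M)$ agreeing with $P$ on $\M\cap L^2(\M)$; hence $P$ and $P'$ paste together into a single operator on the couple $(L^2(\M),\M)$ with values in $(H^2(\A),BMO(\M))$, which justifies writing $P$ again for $P'$. The only delicate ingredients are the $L^2$-convergence of the two different regularizations $e_iue_i$ and $\overline{\widetilde{e_iwe_i}}$, and the density of $H^1(\A)\cap L^2(\M)$ in $H^1(\A)$; both would be handled exactly as in Lemma \ref{lem:saito-type} and in the construction of the $H^1$--$BMO$ pairing.
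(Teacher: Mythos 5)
Your proposal is correct, and on the point the paper actually argues it coincides with the paper's proof: the paper's entire proof of this lemma is the one-line estimate $\|P'(x)\|_{BMO}\le\big\|\tfrac{x+\E(x)}{2}\big\|_\8+\big\|\tfrac{x}{2}\big\|_\8\le\tfrac{3}{2}\|x\|_\8$, obtained from exactly your representative $(u,w)=\big(\tfrac{x+\E(x)}{2},\tfrac{i}{2}x\big)$ (the paper writes $\|\overline{\widetilde{x}}/2\|_\8$, but what is meant is the $\8$-norm of the second entry of the representative, as in your display). Linearity and, more importantly, the assertion that $P'$ actually \emph{extends} $P$ are left implicit in the paper; your duality argument --- regularize by the $e_i$, use the $L^2$-boundedness of $\overline{\sim}$ to get $e_iue_i+\overline{\widetilde{e_iwe_i}}\to P(x)$ in $L^2(\M)$, then test against the dense subspace $\bigcup_{i\in I}\A_{e_i}\subset H^1(\A)\cap L^2(\M)$ --- is a correct and worthwhile filling-in of that step, and it is consistent with how the pairing realizing $H^1(\A)^*=BMO(\M)$ is defined in the paper. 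One small wording caveat: you say that $P(x)$ ``defines a bounded functional on $H^1(\A)$,'' but a priori $a\mapsto\tau(aP(x)^*)$ is only $\|\cdot\|_2$-continuous; the correct formulation is that this functional, on the dense subspace, agrees with the $H^1$-bounded functional given by $P'(x)\in BMO(\M)$, hence extends uniquely and the extension \emph{is} $P'(x)$, so its $H^1$-boundedness is a conclusion rather than a hypothesis. This does not affect the validity of your argument; what your route buys, compared with the paper's terse proof, is an explicit justification that the two maps paste together into one operator on the couple $(L^2(\M),\M)$ with values in $(H^2(\A),BMO(\M))$, which is precisely the form in which the lemma is used for the interpolation argument that follows.
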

\begin{proof}
\be
\|P'(x)\|_{BMO}=\|\frac{x+\E(x)+i\overline{\widetilde{x}}}{2}\|_{BMO}\le\|\frac{x+\E(x)}{2}\|_\8+\|\frac{\overline{\widetilde{x}}}{2}\|_\8\le\frac{3}{2}\|x\|_\8.
\ee
Thus the proof is complete.
\end{proof}

We  proved the interpolation theorem (Proposition \ref{inter:H-BMO}) with the aid of the  boundedness of $P$ on $L^p(\M)$ for all $1<p<\8$. Now we use the  interpolation theorem to prove that $P$ is a bonded operator  on $L^p(\M)$ for all $1<p<\8$. Indeed, using the $L^2$-boundedness of $P$, Lemma \ref{lemm:M-BMO} and Proposition \ref{inter:H-BMO}, we easily check that the  boundedness of $P$ on $L^p(\M)$ for all $2<p<\8$ (and by duality for the case $1<p<2$).

For $0<p\le \8$, we define
\be
b_p(x+\overline{\widetilde{y}})=\inf_{(u,v)\in x+\overline{\widetilde{y}}}\sup_{h\in H^p(\A),\|h\|_p\le1}\|(u-\mathrm{i}(1-\E)(v))h^*\|_{p}.
\ee

\begin{theorem} Let $0<p\le\8$.
\begin{enumerate}[\rm(i)]
  \item If $1\le p\le\8$, then $\|x+\overline{\widetilde{y}}\|_{BMO}\le b_p(x+\overline{\widetilde{y}})\le 2\|x+\overline{\widetilde{y}}\|_{BMO}$
  \item If $0<p<1$, then for every $(u,v)\in x+\overline{\widetilde{y}}$ with $\|u\|_{\infty}+\|v\|_{\infty}\le 2\|x+\overline{\widetilde{y}}\|_{BMO}$,
  \be
  C\|x+\overline{\widetilde{y}}\|_{BMO}\le  \sup_{h\in H^p(\A),\|h\|_p\le1}\|(u-\mathrm{i}(1-\E)(v))h^*\|_{p}\le 2\|x+\overline{\widetilde{y}}\|_{BMO},
  \ee
  where $C$ is a positive constant.
 \end{enumerate}
\end{theorem}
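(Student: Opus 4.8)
The plan is to read the statement as a multiplier (Hankel--Fefferman type) description of $BMO(\M)$: putting $w:=u-\mathrm{i}(1-\E)(v)$, the quantity $\sup_{\|h\|_p\le1}\|wh^*\|_p$ is the norm of left multiplication by $w$ on the anti-analytic space $J(H^p(\A))$, computed in $L^p(\M)$, and the goal is to compare it with $\|x+\overline{\widetilde y}\|_{BMO}$. As in the construction of the duality $H^1(\A)^*=BMO(\M)$, I would systematically compress by $e_i(\cdot)e_i$ into the finite algebras $\M_{e_i}$, where $L^p(\M_{e_i})\subset L^1(\M_{e_i})$ and all the relevant pairings are legitimate, establish the estimates there, and then let $i\to\8$.

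The upper bounds are the soft direction. For $\|h\|_p\le1$ one has $\|wh^*\|_p\le\|w\|_\infty\le\|u\|_\infty+\|(1-\E)(v)\|_\infty$. Since the conjugation annihilates the diagonal ($\widetilde{\E(v)}=0$), the element $x+\overline{\widetilde y}$ and the operator $w$ are both unchanged when $v$ is replaced by $(1-\E)(v)$; normalising so that $\E(v)=0$ gives $\|(1-\E)(v)\|_\infty=\|v\|_\infty$, hence $\|wh^*\|_p\le\|u\|_\infty+\|v\|_\infty$. Taking the supremum over $h$ and, in case (i), the infimum over representatives yields $b_p(x+\overline{\widetilde y})\le2\|x+\overline{\widetilde y}\|_{BMO}$; in case (ii) the same estimate applied to the given representative gives the right-hand inequality.

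For the lower bound with $1\le p\le\8$ I would use the duality $H^1(\A)^*=BMO(\M)$, so that $\|x+\overline{\widetilde y}\|_{BMO}\le\sup\{|\langle a,x+\overline{\widetilde y}\rangle|:a\in H^1(\A),\ \|a\|_1\le1\}$. The crux is the identity $\langle a,x+\overline{\widetilde y}\rangle=\tau(aw^*)$, valid for every representative. To obtain it, write $P(v)=\tfrac12\big(v+\E(v)+\mathrm{i}\,\overline{\widetilde v}\big)$ and verify $x+\overline{\widetilde y}=w+2\mathrm{i}(I-P)(v)$, where $(I-P)(v)\in J([\A_0]_2)$ is anti-analytic and mean-zero; since $a$ is analytic, $\E$ is multiplicative on $\A$ and $\tau=\tau\circ\E$ kills mean-zero analytic products, the anti-analytic term contributes nothing to the pairing (this is exactly where the passage through $\M_{e_i}$, and an approximation of $(I-P)(v)$ by bounded elements, is needed to control integrability). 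Granting this, factor $a=\sum_n c_nd_n$ with $c_n\in H^{p'}(\A)$, $d_n\in H^p(\A)$, $\tfrac1{p}+\tfrac1{p'}=1$, and $\sum_n\|c_n\|_{p'}\|d_n\|_p\le\|a\|_1+\e$ (Theorem \ref{thm:hp-multiplication}); then
\[
|\langle a,x+\overline{\widetilde y}\rangle|=|\tau(wa^*)|\le\sum_n\|wd_n^*\|_p\,\|c_n\|_{p'}\le\Big(\sup_{\|h\|_p\le1}\|wh^*\|_p\Big)(\|a\|_1+\e).
\]
Taking the supremum over $a$ and the infimum over representatives gives $\|x+\overline{\widetilde y}\|_{BMO}\le b_p(x+\overline{\widetilde y})$, which finishes (i).

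The remaining and hardest case is the lower bound in (ii) for $0<p<1$. Here $p'$ would be negative, the factorisation $H^1(\A)=H^{p'}(\A)\odot H^p(\A)$ is unavailable, and the duality argument above simply breaks down; indeed a Riesz-factorisation computation shows that the multiplier norms $s_p:=\sup_{\|h\|_p\le1}\|wh^*\|_p$ are non-decreasing in $p$, so nothing below $p=1$ can be reached by splitting test functions. I would therefore argue directly in each finite algebra $\M_{e_i}$, constructing anti-analytic $H^p$-test elements (noncommutative atoms built from spectral projections of the symbol) that detect the oscillation measured by $\|\cdot\|_{BMO}$, thereby producing a bound $C\|x+\overline{\widetilde y}\|_{BMO}\le s_p$ with $C=C(p)$ independent of $i$, and then pass to the limit. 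This substitute for the missing $L^p$-duality is the genuine difficulty and the source of the non-explicit constant $C$.
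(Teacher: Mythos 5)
Your upper bounds and your part (i) are correct and essentially the paper's own argument: the pairing identity $\langle a,\,x+\overline{\widetilde{y}}\rangle=\tau\big((u-\mathrm{i}(1-\E)(v))a^{*}\big)$ is established by compressing to the finite algebras $\M_{e_i}$ (the paper quotes Lemmas 1 and 3 of \cite{MW1} and Lemma 4.1 of \cite{B1} for exactly this), and the passage from $p=1$ to $1<p\le\infty$ is by Riesz factorization of the test element $a=h_2h_1$. The only cosmetic difference is that you factor through the semifinite Theorem \ref{thm:hp-multiplication}, whereas the paper factors $a\in\bigcup_{i}\A_{e_i}$ inside the finite algebra $\M_{e_{i_0}}$ via Theorem 3.4 of \cite{BX} and then uses density of $\bigcup_{i}\A_{e_i}$ in $H^{1}(\A)$; both work.

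Part (ii) is where your proposal has a genuine gap. You correctly diagnose that $s_p:=\sup_{\|h\|_p\le1}\|(u-\mathrm{i}(1-\E)(v))h^{*}\|_{p}$ is non-decreasing in $p$ and that the duality/factorization route cannot reach $p<1$, but your substitute --- ``noncommutative atoms built from spectral projections of the symbol'' detecting the BMO norm uniformly in $i$ --- is never constructed, and no mechanism is offered for why such test elements should exist with constants independent of $i$; as written, this is a restatement of the difficulty, not a proof. The paper closes the case $0<p<1$ with a short bootstrap by complex interpolation, which is the idea you are missing. Fix the representative $(u,v)$ with $\|u\|_\infty+\|v\|_\infty\le 2\|x+\overline{\widetilde{y}}\|_{BMO}$, view $w=u-\mathrm{i}(1-\E)(v)$ as a left-multiplication operator $H^{q}(\A)\to L^{q}(\M)$, and choose $\theta$ with $1=\frac{1-\theta}{p}+\frac{\theta}{2}$. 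By Theorem \ref{thm:interpolation-semi-hp} (together with $(L^{p}(\M),L^{2}(\M))_\theta=L^{1}(\M)$) the operator norms satisfy
\[
s_1\le c\, s_p^{\,1-\theta}\, s_2^{\,\theta},
\]
while trivially $s_2\le 2(\|u\|_\infty+\|v\|_\infty)\le 4\|x+\overline{\widetilde{y}}\|_{BMO}$ and, by your own part (i) at $p=1$, $\|x+\overline{\widetilde{y}}\|_{BMO}\le s_1$. Combining,
\[
\|x+\overline{\widetilde{y}}\|_{BMO}\le c\, s_p^{\,1-\theta}\big(4\|x+\overline{\widetilde{y}}\|_{BMO}\big)^{\theta},
\]
and solving for $s_p$ yields $C\|x+\overline{\widetilde{y}}\|_{BMO}\le s_p$. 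Note that this also explains a structural feature your sketch does not account for: part (ii) is stated for a fixed near-optimal representative rather than with an infimum precisely because the $H^{2}\to L^{2}$ bound uses the $L^{\infty}$ norms of that particular $(u,v)$.
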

\begin{proof}  It is clear that for all $0<p\le\8$,
\be
b_p(x+\overline{\widetilde{y}})\le2\|x+\overline{\widetilde{y}}\|_{BMO}.
\ee

(i) Let $x,y\in\M$. If $ (u,v)\in x+\overline{\widetilde{y}}$ and $a\in \mathcal{A}_{e_{i}}$, then
\be\begin{array}{rl}
\tau((e_{i}xe_{i}+\overline{\widetilde{e_{i}ye_{i}}})a^{\ast})&=\tau((e_{i}ue_{i}+e_{i}\overline{\widetilde{v}}e_{i})a^{\ast})\\
&=\tau((e_{i}ue_{i}+e_{i}\overline{\widetilde{v}}e_{i})e_{i}a^{\ast}e_{i})\\
&=\tau((e_{i}ue_{i}-\mathrm{i}(1-\E)(e_{i}ve_{i}))e_{i}a^{\ast}e_{i})\\
&=\tau((u-\mathrm{i}(1-\E)(v))e_{i}a^{\ast}e_{i})
 \end{array}
 \ee
(see Lemma 1 and 3 of \cite{MW1}, and Lemma 4.1 in \cite{B1}). Since $ H^{1}(\mathcal{A})$ is closure
 of $\cup_{i\in I}\mathcal{A}_{e_{i}}$ (see Lemma 3.2 in \cite{B1}), by \eqref{dual-H^1},
\be\begin{array}{rl}
      \|x+\overline{\widetilde{y}}\|_{BMO}&\le \sup_{h\in H^1(\A),\|h\|_1\le1}|\langle a,\;x+\overline{\widetilde{y}}\rangle|  \\
      &= \sup_{a\in \cup_{i\in I}\mathcal{A}_{e_{i}},\|a\|_1\le1}|\langle a,\;x+\overline{\widetilde{y}}\rangle|  \\
      & =\sup_{a\in \cup_{i\in I}\mathcal{A}_{e_{i}},\|a\|_1\le1}|\lim_{i}\tau(a(e_{i}xe_{i}+\overline{\widetilde{e_{i}ye_{i}}})^{\ast})|\\
        &=\sup_{a\in \cup_{i\in I}\mathcal{A}_{e_{i}},\|a\|_1\le1}|\lim_{i}\tau((e_{i}xe_{i}+\overline{\widetilde{e_{i}ye_{i}}})a^{\ast})|\\
         &=\sup_{a\in \cup_{i\in I}\mathcal{A}_{e_{i}},\|a\|_1\le1}|\lim_{i}\tau((u-\mathrm{i}(1-\E)(v))e_{i}a^{\ast}e_{i})|\\
        & \le\sup_{a\in \cup_{i\in I}\mathcal{A}_{e_{i}},\|a\|_1\le1}\sup_{i}\tau(|(u-\mathrm{i}(1-\E)(v))e_{i}a^{\ast}e_{i}|)\\
         & \le\sup_{a\in \cup_{i\in I}\mathcal{A}_{e_{i}},\|a\|_1\le1}\|(u-\mathrm{i}(1-\E)(v))a^{\ast}\|_1,
   \end{array}
   \ee
i.e.,
 \beq\label{eq:b_1(x)}
 \|x+\overline{\widetilde{y}}\|_{BMO}\le \sup_{a\in \cup_{i\in I}\mathcal{A}_{e_{i}},\|a\|_1\le1}\|(u-\mathrm{i}(1-\E)(v))a^*\|_{1}.
 \eeq
 Let $1<p\le\8$ and $\frac{1}{p}+\frac{1}{q}=1$. If $a\in \cup_{i\in I}\mathcal{A}_{e_{i}}$ and $\|a\|_1\le1$, then there exists $i_0\in I$ such that $a\in \mathcal{A}_{e_{i_0}}$.
Using Theorem 3.4 in \cite{BX}  we obtain that for $\e>0$, there exist $h_1\in H^p(\A_{e_{i_0}})$ and
$h_2\in H^q(\A_{e_{i_0}})$ such that
\be
a=h_2 h_1\quad\mbox{and}\quad \|h_1\|_p= 1,\,\|h_2\|_q\le 1+\e.
\ee
Hence,
\be
\begin{array}{rl}
\|(u-\mathrm{i}(1-\E)(v))a^*\|_{1}&\le \|(u-\mathrm{i}(1-\E)(v))h_1^*\|_{p}\|h_2^*\|_q\\
&\le(1+\e)\|(u-\mathrm{i}(1-\E)(v))h_1^*\|_{p}.
\end{array}
\ee
 Therefore, by \eqref{eq:b_1(x)},
 \be
 \|x+\overline{\widetilde{y}}\|_{BMO}\le(1+\e)\sup_{h\in H^p(\A),\|h\|_p\le1}\|(u-\mathrm{i}(1-\E)(v))h^*\|_{p},
 \ee
 and so
  \beq\label{eq:b_p(x)}
 \|x+\overline{\widetilde{y}}\|_{BMO}\le\sup_{h\in H^p(\A),\|h\|_p\le1}\|(u-\mathrm{i}(1-\E)(v))h^*\|_{p}.
 \eeq
 Applying \eqref{eq:b_1(x)} and \eqref{eq:b_p(x)}, we obtain the desired result.

 (ii) The right inequality is trivial. We
choose  $0 < \theta<1$ such that $1=\frac{1-\theta}{p}+\frac{\theta}{2}$. It is clear that,  we can view $u-i(1-\E)(v)$ as a bounded operator from $H^q(\A)$ to $L^q(\M)$ for all $0<q<\8$.  Then by interpolation, we get
\be\begin{array}{l}
\|u-\mathrm{i}(1-\E)(v)\|_{(H^{p}(\A),H^2(\A))_\theta\rightarrow(L^{p}(\M),L^2(\M))_\theta}\\
\qquad\le c\|u-\mathrm{i}(1-\E)(v)\|_{H^{p}(\A)\rightarrow L^{p}(\M)}^{1-\theta}\|u-\mathrm{i}(1-\E)(v)\|_{H^2(\A)\rightarrow L^2(\M)}^{\theta}
\end{array}
\ee
On the other hand, it is clear that
\be
\|u-i(1-\E)(v)\|_{H^{2}(\A)\rightarrow L^{2}(\M)}\le 2(\|x\|_{\infty}+\|v\|_{\infty})\le 4\|x+\overline{\widetilde{y}}\|_{BMO}.
\ee
Using Theorem \ref{thm:interpolation-semi-hp} and the left inequality in the case $1\le p\le\8$, we obtain that
 \be
  \|x+\overline{\widetilde{y}}\|_{BMO}\le c \Big(\sup_{h\in H^p(\A),\|h\|_p\le1}\|(u-\mathrm{i}(1-\E)(v))h^*\|_{p}\Big)^{1-\theta}(4\|x+\overline{\widetilde{y}}\|_{BMO})^{\theta}.
  \ee
  From this follows the left inequality. Thus the proof is complete.
\end{proof}

\section*{Appendix: Proof of Theorem \ref{interpolation-finite-hp} }

For easy reference,  using the Pisier's method in \cite{P}, we provide  a  proof of Theorem \ref{interpolation-finite-hp} ( It is stated in \cite{PX}
without proof, bar a comment that the Pisier's method may be extended to the noncommutative case. See the remark following Lemma 8.5 there).
 We denote by $L_0(\mathcal{M})$ the linear space of all $ \tau $-measurable operators.

Let $x\in L_0(\M)$.  We define
distribution function of $x$ by
$$
\lambda_{t}(x)=\tau(e_{(t,\infty)}(|x|)),\quad t\geq 0,
$$
where $e_{(t,\infty)}(|x|)$ is the spectral projection of $|x|$ corresponding
to the interval $(t,\infty)$.
Define
 $$  \mu_t(x)=\inf\{s>0\;:\; \lambda_s(x)\le t\},\quad t>0.$$
The function $t\mapsto \mu_t(x)$ is called  the   generalized singular
numbers  of $x$.
For more details on generalized singular value functions of measurable operators, see
\cite{FK}.

For $0<p<\8$, $0<q\le\8$,
the noncommutative Lorentz space $L^{p, q}(\M)$ is defined as the space of all measurable operators
$x$  such that
   $$
   \big\|x\big\|_{p, q}=\Big(\int_0^\8 \big[t^{1/p}\,
 \mu_t(x)\big]^q\,\frac{dt}t\Big)^{1/q}<\8.
 $$
Equipped with $\|\ \|_{p,q}$, $L^{p, q}(\M)$ is a
quasi-Banach space. Moreover, if $p>1$ and $q\ge1$, then $L^{p,
q}(\M)$ can be renormed into a Banach space. More precisely,
 $$x\mapsto \Big(\int_0^\8 \big[t^{-1\,+\,1/p}\,
 \int_0^t\mu_s(x)ds\big]^q\,\frac{dt}t\Big)^{1/q}$$
gives an equivalent norm on $L^{p, q}(\M)$. It is clear that $L^{p, p}(\M)=L^{p}(\M)$ (for details see \cite{DDP, Xu} ).

 The space $L^{p, \8}(\M)$ is usually called a weak
$L_p$-space, $0<p<\8$. It's quasi-norm admits the following useful
description in terms of the distribution function:
 $$
 \big\|x\big\|_{p, \8}
 =\sup_{s>0}s\,\lambda_s(x)^{1/p}
 $$
(see Proposition 3.1 in \cite{BCLJ}).

For $0<p<\8,\; 0<q\leq\infty$,  we define the noncommutative Hardy-Lorentz spaces
$$  H^{p,q}(\mathcal{A})=\mbox{closure
 of}\;\mathcal{A}\cap L^{p,q}(\mathcal{M})\; \mbox{in
 }\; L^{p,q}(\mathcal{M}),
 $$
 $$
H^{p,q}_{0}(\mathcal{A})=\mbox{closure
 of}\;\mathcal{A}_0\cap L^{p,q}(\mathcal{M})\; \mbox{in
 }\; L^{p,q}(\mathcal{M}).
$$

In the rest of  this appendix $(\M,\tau)$ always denotes a finite von Neumann algebra equipped with a
finite faithful trace $\tau$, and $\A$   a subdiagonal algebra
of  $\M$.

Let $\mathcal{N}$  be the algebra of infinite diagonal matrices with entries from
 $\mathcal{M}$ and  $\mathcal{B}$ be the algebra of infinite diagonal  matrices with entries from
 $\mathcal{A}$, i.e.

\be
\N=\left\{x:\;x= \left(
\begin{matrix}
x_{1} & 0 & \ldots \\
0& x_{2} & \ldots \\
\vdots & \vdots & \ddots
\end {matrix}
\right ),\; x_i\in \M, i\in \mathbb{N}\right\}
\ee
and
\be\B=\left\{a:\;a= \left(
\begin{matrix}
a_{1} & 0 & \ldots \\
0& a_{2} & \ldots \\
\vdots & \vdots & \ddots
\end {matrix}
\right ),\; a_i\in \A, i\in \mathbb{N}\right\}.
\ee
 For $x\in \mathcal{N}$ with entries $x_{i}$, define $\Phi(x)$ to be the diagonal matrix with entries
 $\mathcal{E}(x_{i})$ and
 $$
\nu(x)=\sum_{i\ge1}\tau(x_{i}).
$$
Then $(\mathcal{N},\nu)$ is a semifinite von Neumann algebra and
$\mathcal{B}$ is a semifinite subdiagonal algebra of $(\mathcal{N},\nu)$ respect to $\Phi$.

  Let $0<p_k, q_k\le\8$ $(k=0, 1)$ with $p_0\neq p_1$, and let
$0<\theta<1$ and $0<q\le\8$. Then
\be
 \hskip 2cm \big(L^{p_0, q_0}(\N),\; L^{p_1, q_1}(\N)\big)_{\theta,q}=L^{p, q}(\N)\hskip 3cm \mbox{\rm(A.1)}
\ee
with equivalent quasi-norms, where $1/p=(1-\theta)/p_0 + \theta/p_1$ (for details see \cite{DDP1,PX, Xu2}).

Using the same method as in the proof of Lemma 4.1 of \cite{P}, we obtain that

{\bf Lemma A.1} {\it Let $1<p<\8$. If $\theta=\frac{1}{p}$, then
\be
 (L^{1,\8}(\N)/J(H_0^{1,\8}(\B)), \N/J(\B_0))_{1-\theta,\8}= H^{p,\8}(\B)
\ee
with equivalent norms.}

\begin{proof}
Let $1<r<q<\8$.  Set $\eta=1-\frac{1}{r}, \gamma=1-\frac{1}{q}$. It is clear that  $0<\eta<\gamma<1$. Using (A.1) and Theorem 6.3 in \cite{B1}  we know that
\be
(L^1(\N), \N)_{\eta,r}=L^r(\N),\;(L^1(\N), \N)_{\gamma,q}=L^q(\N)
\ee
and
\be
(H^1(\B), \B)_{\eta,r}=H^r(\B),\;(H^1(\B), \B)_{\gamma,q}=H^q(\B).
\ee
On the other hand,  (i) and (iii) of Lemma 6.5 in \cite{B1}, we have that $(H^1(\B),H^r(\B))$  is $K$-closed with respect to $(L^1(\N),L^r(\N))$ and  $(H^q(\B),\B)$  is $K$-closed with respect to $(L^q(\N),\N)$. Hence by Theorem 1.2 in \cite{K}, we obtain $(H^1(\B),\B)$  is $K$-closed with respect to $(L^1(\N),\N)$. Since one can extend $\Phi$ to a contractive projection from $L^p(\N)$
onto $L^p(\B\cap\B^*)$ for every $1 \le p \le\8$, we deduce that $(H^1_0(\B),\B_0)$  is $K$-closed with respect to $(L^1(\N),\N)$. By Holmstedt's formula (see \cite{BL}, p. 52-53), we get $(H_0^{1,\8}(\B),\B_0)$ is $K$-closed with respect to $(L^{1,\8}(\N),\N)$. Therefore,
$$
(J(H_0^{1,\8}(\B)),J(\B_0))_{1-\theta,\8}=J(H_0^{q,\8}(B)).
$$
By Theorem 4.2 in \cite{Ja}, we have that
$$\begin{array}{l}
(L^{1,\8}(\N)/J(H_0^{1,\8}(\B)), \N/J(\B_0))_{1-\theta,\8}\\
\qquad=(L^{1,\8}(\N), \N)_{1-\theta,\8}/(J(H_0^{1,\8}(\B)),J(\B_0))_{1-\theta,\8} \\
 \qquad=L^{p,\8}(\N)/J(H_0^{p,\8}(\B))
  \end{array}
$$
with equivalence of quasi-norms. By (A.1) and Theorem 6.3 in \cite{B1}, for $1<p_0<p<p_1<\8$ and $0<\vartheta<1$ with $\frac{1}{p}=\frac{1-\vartheta}{p_0}+\frac{\vartheta}{p_1}$, we have that
\be
(L^{p_0}(\N), L^{p_1}(\N))_{\vartheta,\8}=L^{p,\8}(\N),\; (H^{p_0}(\B), H^{p_1}(\B))_{\vartheta,\8}=H^{p,\8}(\B).
\ee
Applying Theorem 4.2 in \cite{B1} and interpolation, we  deduce that
$$L^{p,\8}(\N)/J(H_0^{p,\8}(\B))$$
  can be identified with $H^{p,\8}(\B)$. Hence, the desired result holds. This completes the proof.
\end{proof}

For $1\le p\le \8$, we define $K_p:L^p(\M)\rightarrow L^{p,\8}(\N)$ by
\be
K_p(x)=\left(
\begin{matrix}
x & 0 &0& \ldots \\
0& 2^{-\frac{1}{p}}x & 0&\ldots \\
0& 0 & 3^{-\frac{1}{p}}x&\ldots \\
\vdots & \vdots & \vdots &\ddots
\end {matrix}
\right ),\quad \forall x\in L^p(\M).
\ee
Then
$$
 \hskip 5cm \|x\|_p=\|K_p(x)\|_{p,\8}. \hskip 5cm \mbox{\rm(A.2)}
$$
Indeed,
it is clear that for any  $x\in\M$,
 $$
 \|K_p(x)\|_\8=\|x\|_\8.
 $$
 If $1\le p< \8$ and $x\in L^{p}(\M)$,  then by  Chebychev¡¯s  inequality,
 $$
 \tau(e_{(j^\frac{1}{p}t,\8)}(|x|))\le \frac{\|x\|_p^p}{jt^p}<\8, \quad  \forall t>0,\; \forall j\ge1.
 $$
 Since $e_{(j^\frac{1}{p}t,\8)}(|x|)=e_{(j^\frac{1}{p}t,(j+1)^\frac{1}{p}t]}(|x|)+e_{((j+1)^\frac{1}{p}t,\8)}(|x|)$, we get
 $$
 \tau(e_{(j^\frac{1}{p}t,(j+1)^\frac{1}{p}t]}(|x|))=\tau(e_{(j^\frac{1}{p}t,\8)}(|x|))-\tau(e_{((j+1)^\frac{1}{p}t,\8)}(|x|)).
 $$
Using left-Riemann sums, we further get:
$$
\begin{array}{rl}\|x\|^p_p&=\tau(|x|^{p})
=\int_{0}^{+\infty}s^{p}d\tau(e_{s}(|x|))\\
& = \sup_{t>0}\sum_{j\ge1}jt^{p}\tau(e_{(j^\frac{1}{p}t,(j+1)^\frac{1}{p}t]}(|x|))\\
& = \sup_{t>0}\sum_{j\ge1}jt^{p}[\tau(e_{(j^\frac{1}{p}t,\8)}(|x|))-\tau(e_{((j+1)^\frac{1}{p}t,\8)}(|x|))]\\
& = \sup_{t>0}\sum_{j\ge1}t^{p}\tau(e_{(j^\frac{1}{p}t,\8)}(|x|))\\
&= \sup_{t>0}\sum_{j\ge1}t^{p}\tau(e_{(t,\8)}(|j^{-\frac{1}{p}}x|))\\
&=\sup_{t>0}t^p\nu(e_{(t,\8)}(K_p(x)))\\
&=\sup_{t>0}t^p\lambda_t(K_p(x))\\
& =\|K_p(x)\|_{p,\8}^p.
\end{array}
$$
Thus $K_p$ is an isometry. Note that $K_p$ maps $H_0^p(\A)$ into $H^{p,\8}_0(\B)$. Let
$$\widetilde{K}_p:L^p(\M)/J(H_0^p(\A))\rightarrow L^{p,\8}(\N)/J(H^{p,\8}_0(\B))$$
 be the mapping canonically associated to $K_p$. We have following result (see Lemma 4.2 in \cite{P}).

 {\bf Lemma A.2} {\it Let $1<p<\8$ and $\theta=\frac{1}{p}$. Then $\widetilde{K}_p$ defines a bounded mapping  from
$$
(L^{1}(\M)/J(H_0^{1}(\A)), \M/J(\A_0))_{1-\theta}
$$
into $H^{p,\8}(\B)$, and the norm of  $\widetilde{K}_p$  depends only on $p$.}

\begin{proof}
Let
$$
X=\left\{\left(
\begin{matrix}
x & 0 &0& \ldots \\
0& 2^{-\frac{1}{p}}x & 0&\ldots \\
0& 0 & 3^{-\frac{1}{p}}x&\ldots \\
\vdots & \vdots & \vdots &\ddots
\end {matrix}
\right ):\quad \forall x\in L^1(\M)\right\}.
$$
 We equip $X$ with the norm
 $$
 \left\|\left(
\begin{matrix}
x & 0 &0& \ldots \\
0& 2^{-\frac{1}{p}}x & 0&\ldots \\
0& 0 & 3^{-\frac{1}{p}}x&\ldots \\
\vdots & \vdots & \vdots &\ddots
\end {matrix}
\right )\right\|_{X}=\|x\|_1.
$$
 Then $X$ becomes a Banach space. It is clear that $X\subset L^{1,\8}(\N)$ and this inclusion has norm one. Let
 $$
 X_0=\left\{\left(
\begin{matrix}
x & 0 &0& \ldots \\
0& 2^{-\frac{1}{p}}x & 0&\ldots \\
0& 0 & 3^{-\frac{1}{p}}x&\ldots \\
\vdots & \vdots & \vdots &\ddots
\end {matrix}
\right ):\quad \forall x\in J(H^1_0(\M))\right\}.
$$
 Then we have that
\be
\hskip 4.5cm  X/X_0\subset L^{1,\8}(\N)/J(H_0^{1,\8}(\B)) \hskip 4.2cm \mbox{\rm(A.3)}
\ee
with norm one.

For any $z\in S$, let
\be
K_z(x)=\left(
\begin{matrix}
x & 0 &0& \ldots \\
0& 2^{z-1}x & 0&\ldots \\
0& 0 & 3^{z-1}x&\ldots \\
\vdots & \vdots & \vdots &\ddots
\end {matrix}
\right ),\quad \forall x\in \M.
\ee
Then $\{\widetilde{K}_z\}_{z\in \overline{S}}$ is an  analytic family of linear operators on
$$ (L^1(\M)/J(H_0^{1}(\A)))\cap (\M/J(\A_0))$$
into
$$
X/X_0+N/J(\B_0).
$$
From (A.2), it follows that if $\mathrm{Re}(z) = 0$, $\widetilde{K}_z$ is a contraction from
$$
L^1(\M)/J(H_0^{1}(\A))
$$
into $X/X_0$ and if $\mathrm{Re}(z) = 1$, it is a contraction from $\M/J(\A_0)$ into $N/J(\B_0)$. Hence, by Stein's interpolation theorem for analytic families of operators (see Theorem 1 in \cite{CJ}), we obtain that
 $\widetilde{K}_p$ is a contraction from
 $$
 (L^1(\M)/J(H_0^{1}(\A)), \M/J(\A_0))_{1-\frac{1}{p}}
 $$
 into
 $$(X/X_0,N/J(\B_0))_{1-\frac{1}{p}}.
 $$
 By Theorem 4.7.1 in \cite{BL}, $\widetilde{K}_p$ is a contraction  from
 $$
 (L^1(\M)/J(H_0^{1}(\A)), \M/J(\A_0))_{1-\frac{1}{p}}
 $$
  into
  $$
  (X/X_0,N/J(\B_0))_{1-\frac{1}{p},\8}.
  $$
  By (A.3), $\widetilde{K}_p$ is a contraction  from
 $$
 (L^1(\M)/J(H_0^{1}(\A)), \M/J(\A_0))_{1-\frac{1}{p}}
 $$
 into
 $$
 (L^{1,\8}(\N)/J(H_0^{1,\8}(\B)),N/J(\B_0))_{1-\frac{1}{p},\8}.
 $$
Using Lemma A.1, we obtain the desired result.  This completes the proof.

\end{proof}

{\bf Lemma A.3} {\it Let $1<q<\8$ . Then there is a bounded
natural inclusion
\be
\hskip 3cm (L^{1}(\M)/J(H_0^{1}(\A)), \M/J(\A_0))_{1-\frac{1}{q}}\subset H^{q}(\A), \hskip 3cm \mbox{\rm(A.4)}
\ee
and the norm of the inclusion  depends only on $q$.}

\begin{proof}
Let $x\in \A$. By (A.2) and Lemma A.2, we have that
$$\begin{array}{rl}

\|x\|_q&=\left\|\left(
\begin{matrix}
x & 0 &0& \ldots \\
0& 2^{-\frac{1}{q}}x & 0&\ldots \\
0& 0 & 3^{-\frac{1}{q}}x&\ldots \\
\vdots & \vdots & \vdots &\ddots
\end {matrix}
\right )\right\|_{q,\8}\\
&\leq C\|x\|_{(L^{1}(\M)/J(H_0^{1}(\A)), \M/J(\A_0))_{1-\frac{1}{q}}}.
\end{array}
$$
Hence, we get (A.4). This completes the proof.
\end{proof}

Let $(X_0, X_1)$ be a compatible couple of complex Banach  spaces. If  $X_0\cap X_1$ is dense in  $X_0$ and  $X_1$, then the dual of  $X_0\cap X_1$ can be identified with  $X_0^*+ X_1^*$ and the dual of  $X_0+X_1$ can identified with  $X_0^*\cap X_1^*$, which provides a scheme where $X_0^*$ and  $X_1^*$ are compatible.
We shall need the second Calder\'{o}n interpolation method: let us denote by $\G(X_0,
X_1)$  the family of all functions $g:\;\overline{S}\to X_0+X_1$
satisfying the following conditions:
 \begin{enumerate}[$\bullet$]
 \item $g$ is continuous on $\overline{S}$ and analytic in
 $S$;
  \item $\|g(z)\|_{X_0+X_1}\le C(1+|z|)$;
 \item $g(k+it_1)-g(k+it_2)\in X_k$ for $t_1,\;t_2\in\real,\;k=0,\,1$,  and
$$
\|g\|_{\G(X_0,X_1)}=\max_{k=0,1}\sup_{t_1,t_2\in\real,t_1\neq t_2}\left\|\frac{g(k+it_1)-g(k+it_2)}{t_1-t_2}\right\|_{X_k}<\8.
$$
 \end{enumerate}
The space $\G(X_0,X_1)$ reduced modulo the constant functions and equipped
with the norm above, is a Banach space and the second complex interpolation
spaces are defined by
$$
(X_0,\; X_1)^\theta=\left\{g'(\theta)\;:\;g\in\G(X_0,X_1)\right\},
$$
which are Banach spaces under the norm
 $$\|x\|_{(X_0,\; X_1)^\theta}=\inf\big\{\big\|g\big\|_{\G(X_0, X_1)}\;:\; g'(\theta)=x,\;
 g\in\G(X_0, X_1)\big\}.$$
Calder\'{o}n \cite{C} showed the inclusion $(X_0,\; X_1)_\theta\subseteq(X_0,\; X_1)^\theta$ and Bergh \cite{B}
 proved that
$$
\hskip 2.8cm \|x\|_{(X_0,\; X_1)^\theta}=\|x\|_{(X_0,\; X_1)_\theta},\quad \forall x\in(X_0,\; X_1)_\theta. \hskip 2.4cm \mbox{\rm(A.5)}
$$

 {\bf Proof of Theorem \ref{interpolation-finite-hp}.}
 We first consider the case $p_0=1,\;p_1=\8$.
 The inclusions
$(H^1(\A),\A)_\theta \subset(L^1(\M),\M)_\theta= L^p(\M)$
and
$ (H^1(\A),\A)_\theta \subset H^1(\A)$
imply $(H^1(\A),\A)_\theta\subset H^p(\A)$ (see Proposition 3.3 in \cite{BX}), where $\frac{1}{p}=1-\theta$.
Conversely, let $\frac{1}{p}+\frac{1}{q}=1$, i.e., $\theta=\frac{1}{q}$. By Lemma A.3, $(L^{1}(\M)/J(H_0^{1}(\A)), \M/J(\A_0))_{1-\theta}\subset H^{q}(\A)$. Using the duality theorem (Theorem 4.5.1 in \cite{BL}) we obtain that
$$
 H^p(\A)\subset(\A, (H^1(\A))^{**})^{1-\theta}=( (H^1(\A))^{**},\A)^{\theta}.
$$
Hence,
$$
\hskip 3cm \|a\|_{( (H^1(\A))^{**},\A)^{\theta}}\le C\|a\|_p,\quad \forall a\in  H^p(\A). \hskip 3.2cm \mbox{\rm(A.6)}
$$
Since $\A\subset H^1(\A)$, from the definition, we get that the norm in $(H^1(\A),\A)_{\theta}$ is the restriction of the norm in $( (H^1(\A))^{**},\A)_{\theta}$. By (A.5), we obtain that
$$
\hskip 2cm \|a\|_{( (H^1(\A))^{**},\A)^{\theta}}=\|a\|_{(H^1(\A),\A)_{\theta}},\quad \forall a\in (H^1(\A),\A)_{\theta}. \hskip 2cm \mbox{\rm(A.7)}
$$
Note that $A\subset  H^p(\A)$ and $\A\subset (H^1(\A),\A)_{\theta}$. Hence, by (A.6) and (A.7), we get
$$
\|a\|_{(H^1(\A),\A)_{\theta}}\le C\|a\|_p,\quad \forall a\in  A.
$$
Since  $A$ is dense $H^p(\A)$, using the above inequality and the method in the proof of Theorem \ref{thm:interpolation-semi-hp} we obtain that
$$
\|a\|_{(H^1(\A),\A)_{\theta}}\le C\|a\|_p,\quad \forall a\in   H^p(\A).
$$
Thus gives the desired result for the case $p_0=1$ and $p_1=\8$.

By what is
already proved and and the reiteration theorem (\cite{BL}, Theorem
4.6.1) we obtain that
for any $1\le p_0,\ p_1\le\8$ and $0<\theta<1$
$$
 \hskip 4cm (H^{p_0}(\A), H^{p_1}(\A))_\theta=H^{p}(\A),  \hskip 4.8cm \mbox{\rm(A.8)}
$$
with  equivalent norms, where $\frac{1}{p}=\frac{1-\theta}{p_0}+\frac{\theta}{p_1}$.

From this special case, we get the general case.
Indeed, since $(H^{p_0}(\A), H^{p_1}(\A))_\theta \subset(L^{p_0}(\M),L^{p_1}(\M))_\theta= L^p(\M)$
and $ (H^{p_0}(\A),H^{p_1}(\A))_\theta \subset H^{\min\{p_0,p_1\}}(\A)$, by Proposition 3.3 in \cite{BX}, we get
$(H^{p_0}(\A),H^{p_1}(\A))_\theta\subset H^p(\A)$. Conversely, we choose an integer $n$ such that
$\min\{np_0,\;np_0\}\ge1$. Let $x\in H^p(\A)$. By Theorem 3.4 in \cite{BX}, for $\e>0$, there exist $x_j\in H^{np}(\A)\;(j=1,2,\cdots,n)$ such that
 $$
 x=x_1x_2\cdots x_n, \quad  \|x_1\|_{np}\|x_2\|_{np}\cdots\|x_n\|_{np}<\|x\|_p+\e.
 $$
Since $\frac{1}{np}=\frac{1-\theta}{np_0}+\frac{\theta}{np_1}$, by (A.8),
$x_j\in(H^{np_0}(\A), H^{np_1}(\A))_\theta,\,(j=1,2,\cdots,n)$. Then there exist $f_j\in \F_0(H^{np_0}(\A), H^{np_1}(\A))$ such that
$f_j(\theta)=x_j$ and
 $$
 \big\|f_j\big\|_{\F_0(H^{np_0}(\A), H^{np_1}(\A))}<\|x_j\|_{(H^{np_0}(\A), H^{np_1}(\A))_\theta}+\e\le C\|x_j\|_{np}+\e,\quad j=1,\,2,\,\cdots,\,n.
 $$
Set $f=f_1f_2\cdots f_n$. Then $f\in \F_0(H^{p_0}(\A), H^{p_1}(\A))$ and $f(\theta)=x$. Hence
$$
\begin{array}{l}
  \|x\|_{(H^{p_0}(\A), H^{p_1}(\A))_\theta}\\
  \qquad\le\big\|f\big\|_{\F_0(H^{p_0}(\A), H^{p_1}(\A))}\\
  \qquad=\max\big\{
 \sup_{t\in\real}\big\|f(it)\big\|_{H^{p_0}(\A)}\,,\;
 \sup_{t\in\real}\big\|f(1+it)\big\|_{H^{p_1}(\A)}\big\} \\
 \qquad\le \max\big\{ \sup_{t\in\real}\Pi_{j=1}^n\big\|f_j(it)\big\|_{H^{np_0}(\A)}\,,\;
 \sup_{t\in\real}\Pi_{j=1}^n\big\|f_j(1+it)\big\|_{H^{np_1}(\A)}\big\}\\
 \qquad\le \Pi_{j=1}^n\max\big\{ \sup_{t\in\real}\big\|f_j(it)\big\|_{H^{np_0}(\A)}\,,\;
 \sup_{t\in\real}\big\|f_j(1+it)\big\|_{H^{np_1}(\A)}\big\}\\
 \qquad=\Pi_{j=1}^n\big\|f_j\big\|_{\F_0(H^{np_0}(\A), H^{np_1}(\A))}\\
 \qquad< \Pi_{j=1}^n(C\|x_j\|_{np}+\e)\\
 \qquad <C^n\|x\|_p+O(\e).
\end{array}
$$
Letting $\e\rightarrow0$ yields  $\|x\|_{(H^{p_0}(\A), H^{p_1}(\A))_\theta}\le C^n\|x\|_p$. Hence $(H^{p_0}(\A), H^{p_1}(\A))_\theta=H^{p}(\A)$ with  equivalent norms.

\subsection*{Acknowledgement} The authors are grateful to the anonymous referee for making helpful comments and suggestions, which have been incorporated into this version of the paper.

\end{document}